\newtheorem{thm}{Theorem}[section]
\newtheorem{lem}[thm]{Lemma}
\newtheorem{cor}[thm]{Corollary}
\newtheorem{prop}[thm]{Proposition}
\newtheorem{ex}[thm]{Example}
\newtheorem*{prob*}{Open problem}
\theoremstyle{definition}
\newtheorem{defi}[thm]{Definition}
\theoremstyle{remark}
\newtheorem{rem}[thm]{Remark}
\newtheorem*{rem*}{Remark}
\newcommand{\kringel}{\mathbin{\raise1pt\hbox{$\scriptstyle\circ$}}}
\newcommand{\pkt}{\mathbin{\raise0pt\hbox{$\scriptstyle\bullet$}}}
\newcommand{\C}{\mathbb{C}}
\newcommand{\N}{\mathbb{N}}
\newcommand{\R}{\mathbb{R}}
\newcommand{\ad}{{\rm ad}}
\newcommand{\End}{{\rm End}}
\newcommand{\Der}{{\rm Der}}
\newcommand{\La}{\mathfrak{a}}
\newcommand{\Lf}{\mathfrak{f}}
\newcommand{\Lg}{\mathfrak{g}}
\newcommand{\Ll}{\mathfrak{l}}
\newcommand{\Ls}{\mathfrak{s}}
\newcommand{\LD}{\mathfrak{D}}
\newcommand{\CB}{\mathcal{B}}
\newcommand{\CC}{\mathcal{C}}
\newcommand{\al}{\alpha}
\newcommand{\Ga}{\Gamma}
\renewcommand{\phi}{\varphi}
\newcommand{\Inn}{{\rm Inn}}
\newcommand{\AID}{{\rm AID}}
\newcommand{\CAID}{{\rm CAID}}
\begin{document}

% Ab hier duerfen Sie wieder.

\title[Almost Inner Derivations]{Almost inner derivations of Lie algebras II}
%  Die Kurzfassung kommt oben ueber die Seiten, sie steht in eckigen Klammern
%  Auch Autorennamen koennen eine Kurzfassung haben

\author[D. Burde]{Dietrich Burde}
\author[K. Dekimpe]{Karel Dekimpe}
\author[B. Verbeke]{Bert Verbeke}
\address{Fakult\"at f\"ur Mathematik\\
Universit\"at Wien\\
  Oskar-Morgenstern-Platz 1\\
  1090 Wien \\
  Austria}
\email{dietrich.burde@univie.ac.at}
\address{Katholieke Universiteit Leuven Kulak\\
E. Sabbelaan 53 bus 7657\\
8500 Kortrijk\\
Belgium}
\email{karel.dekimpe@kuleuven.be}
%\address{Katholieke Universiteit Leuven Kulak\\
%E. Sabbelaan 53 bus 7657\\
%8500 Kortrijk\\
%Belgium\\}
%\address{Erwin Schr\"odinger Int. Institute for Mathematics and Physics, University of Vienna\\
%	Boltzmanngasse 9\\
%	1090 Wien\\
%	Austria}
\email{bert.verbeke@kuleuven.be}

\date{\today}

\subjclass[2010]{17B40}
\keywords{Almost inner derivations}

\begin{abstract}
We continue the algebraic study of almost inner derivations of Lie algebras over a field of characteristic zero 
and determine these derivations for free nilpotent Lie algebras, for almost abelian Lie algebras, 
for Lie algebras whose solvable radical is abelian and for several classes of filiform nilpotent Lie algebras.
We find a family of $n$-dimensional characteristically nilpotent filiform Lie algebras $\Lf_n$, 
for all $n\ge 13$, all of whose derivations are almost inner. Finally we compare the almost inner derivations of 
Lie algebras considered over two different fields $K\supseteq k$ for a finite-dimensional field extension. 
\end{abstract}

\maketitle
\section{Introduction}

Almost inner automorphisms of Lie groups and almost inner derivations of Lie algebras have been introduced 
by Gordon and Wilson \cite{GOW} in the study of isospectral deformations of compact solvmanifolds. 
They constructed isospectral but non-isometric compact Riemannian manifolds of the form $G/\Ga$, with a simply connected 
exponential solvable Lie group $G$, and a discrete cocompact subgroup $\Ga$ of $G$. This construction relies on almost 
inner automorphisms and almost inner derivations. Larsen \cite{LAR} studied algebraic groups for which every
two almost conjugated homomorphisms are globally conjugated. This is closely related to the question whether
or not a compact group can be the common covering space of a pair of non-isometric isospectral manifolds. \\
The concept of ``almost inner'' automorphisms and derivations, almost homomorphisms, or almost conjugate subgroups 
arises in many areas of algebra and geometry. However, a systematic algebraic study was not done so far. 
So we started an investigation of almost inner derivations of Lie algebras in \cite{BU55}. The aim of this paper is to
continue this study and prove further results on almost inner derivations for certain classes of Lie algebras. \\[0.2cm]
We recall the following definitions. Let $\Lg$ be a finite-dimensional Lie algebra over a field $K$ and $\Der(\Lg)$ its 
derivation Lie algebra. A derivation $D\in \Der(\Lg)$ is said to be {\em almost inner}, if $D(x)\in [\Lg,x]$ for all $x\in \Lg$. 
The space of all almost inner derivations of $\Lg$ is denoted by $\AID(\Lg)$. 
The subspace $\AID(\Lg)$ becomes a Lie subalgebra of $\Der(\Lg)$ by the Lie bracket $[D,D']=DD'-D'D$.
We denote the Lie subalgebra of inner derivations by $\Inn(\Lg)$. An almost inner derivation $D\in \AID(\Lg)$ is 
called {\em central almost inner} if there exists an $x\in \Lg$ such that $D-\ad (x)$ maps $\Lg$ to the center $Z(\Lg)$. 
We denote the subalgebra of central almost inner derivations of $\Lg$ by $\CAID(\Lg)$. \\[0.2cm]
The paper is structured as follows. In the second section we show that every almost inner derivation of a free nilpotent
Lie algebra over a field of characteristic zero is inner. Similarly, in the third section, we show that every
almost inner derivation of an almost abelian Lie algebra is inner. In the fourth section we compute the almost inner
derivations for certain classes of filiform nilpotent Lie algebras, e.g., for the Witt algebras $W_n$ and for a family
$\Lf_n$, $n\ge 13$, which is closely related to $W_n$. We show that the Witt algebra has $4$ linearly independent outer
derivations, where $3$ of them are almost inner. For $\Lf_n$ we show that every derivation is almost inner.
This comes as a surprise and it is the first known family of nilpotent Lie algebras, where all derivations are
almost inner. In the fifth section we show that every almost inner derivation is inner for Lie algebras whose
solvable radical is abelian over an algebraically closed field of characteristic zero. Here we use results 
about ``distinguished'' elements
in semisimple Lie algebras, whose centralizers consist entirely of nilpotent elements. \\
The last two sections contain results about almost inner derivations of Lie algebras considered over two different fields 
$K\supseteq k$. In particular we show
that if a Lie algebra viewed over $K$ admits a non-trivial almost inner derivation, then so does also the Lie
algebra viewed over the smaller field $k$. However, the converse does not hold in general. We can construct new almost 
inner derivations using field extensions.

\section{Free nilpotent Lie Algebras}

In this section we will prove that a free nilpotent Lie algebra over a field of characteristic zero does not admit any 
non-trivial almost inner derivations. We will use the following lemma, which can be shown by induction on $k$.

\begin{lem}
\label{lem: induction-lemma}
Let $k$ be a non-negative integer and $V$ be a vector space over a field $K$ of characteristic zero. Consider a 
sequence $v_0, v_1, v_2, \ldots$ in $V$. Suppose that there exist $a_0, a_1, \ldots, a_k \in V$ such that 
\begin{equation*}
v_{n + 1} - v_n = \sum_{j = 0}^k n^j a_j 
\end{equation*} 
for all  $n \in \N$. Then there exist vectors $b_0, b_1, \ldots, b_{k + 1} \in V$ such that
\begin{align*}
v_n & = \sum_{j = 0}^{k + 1} n^j b_j \quad \forall\, n \in \N, \\
a_k & \neq 0 \Longrightarrow b_{k + 1} \neq 0.
\end{align*}
\end{lem}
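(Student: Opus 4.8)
The plan is to argue by induction on $k$. The underlying idea is that the hypothesis expresses the forward difference $v_{n+1}-v_n$ as a vector-valued polynomial in $n$ of degree $k$, and a discrete analogue of integration should raise the degree by one, producing $v_n$ itself as a polynomial of degree $k+1$. For the base case $k=0$ we have $v_{n+1}-v_n=a_0$ for all $n$, so telescoping gives $v_n=v_0+n\,a_0$; hence $b_0=v_0$ and $b_1=a_0$ work, and indeed $a_0\neq 0$ forces $b_1\neq 0$.

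For the inductive step, assume the statement for $k-1$. The key move is to peel off the leading term. Since the polynomial identity
\[
(n+1)^{k+1}-n^{k+1}=(k+1)\,n^k+\big(\text{terms of degree}\le k-1\text{ in }n\big)
\]
holds, and since $\mathrm{char}\,K=0$ makes $k+1$ invertible, we may set
\[
w_n=v_n-\tfrac{1}{k+1}\,n^{k+1}a_k .
\]
Then $w_{n+1}-w_n$ equals $\sum_{j=0}^{k}n^j a_j$ minus $\big(n^k+(\text{lower order})\big)a_k$, so the $n^k$-term cancels and we obtain $w_{n+1}-w_n=\sum_{j=0}^{k-1}n^j a_j'$ for suitable vectors $a_0',\dots,a_{k-1}'\in V$ gathered from the remaining binomial contributions. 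By the induction hypothesis applied to $w_n$ there exist $b_0',\dots,b_k'\in V$ with $w_n=\sum_{j=0}^{k}n^j b_j'$ for all $n\in\N$. Substituting back yields
\[
v_n=\sum_{j=0}^{k}n^j b_j'+\tfrac{1}{k+1}\,n^{k+1}a_k,
\]
so the desired formula holds with $b_j=b_j'$ for $j\le k$ and $b_{k+1}=\tfrac{1}{k+1}a_k$.

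The implication is then immediate from this explicit top coefficient: since $b_{k+1}=\tfrac{1}{k+1}a_k$, the assumption $a_k\neq 0$ gives $b_{k+1}\neq 0$. Note that the implication part of the induction hypothesis is not even needed, as the leading coefficient is produced directly by the construction.

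I do not expect a serious obstacle here, as the argument is a standard finite-difference induction; the one point that genuinely requires care is that extracting the leading coefficient forces a division by $k+1$, which is exactly where characteristic zero is essential. The rest is the bookkeeping of expanding $(n+1)^{k+1}-n^{k+1}$ and regrouping the lower-order terms into the new $a_j'$, a step that leaves the top coefficient untouched and therefore does not interfere with the conclusion.
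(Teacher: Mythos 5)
Your proof is correct and takes exactly the route the paper intends: the paper gives no details beyond asserting that the lemma ``can be shown by induction on $k$,'' and your finite-difference induction---subtracting $\frac{1}{k+1}n^{k+1}a_k$ to lower the degree of the difference, applying the inductive hypothesis, and reading off $b_{k+1}=\frac{1}{k+1}a_k$ (where characteristic zero is used)---is a valid implementation of that induction. The only trivial point to watch is the indexing convention: if $\N$ starts at $1$, the telescoping in the base case should start from $v_1$ rather than $v_0$ (giving $b_0=v_1-a_0$), which changes nothing essential.
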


Consider the free Lie algebra $\Lg$ on two generators $x_1$ and $x_2$. Define $\Lg_1$ as the vector space spanned by the
generators  $x_1$ and $x_2$ and $\Lg_n$, for $n \geq 2$, as the subspace of $\Lg$ generated by all Lie brackets of 
length $n$ in the generators $x_1$ and $x_2$. Denote further $\Lg_{i,j}$ for the subspace of $\Lg$ generated by all 
Lie brackets in the generators where the first generator $x_1$ appears $i$ times and the second one $x_2$ appears $j$ times. 
It is clear that
\begin{equation*}
\Lg = \bigoplus_{n=1}^\infty \Lg_n \quad \text{ and } \quad \Lg_n = \bigoplus_{i = 1}^{n-1} \Lg_{i,n-i} \; (\text{for } n \geq 2).
\end{equation*} 
Moreover, it holds that 
\begin{equation*}
[ \Lg_i, \Lg_j]\subseteq \Lg_{i+j} \quad \text{ and } \quad [\Lg_{i,j}, \Lg_{p,q}]\subseteq \Lg_{i+p,j+q}. 
\end{equation*} 

We are interested in the equation
\begin{equation*}
[x_1,x] + [x_2,y] = 0
\end{equation*} in the variables $x$ and $y$, which was studied in \cite{RES}.
Let 
\begin{equation*}
V = \{ (x,y) \in \Lg \times \Lg \mid [x_1,x] + [x_2,y] = 0\}
\end{equation*} be the solution space of the equation. Note that $V$ is a vector space. For each $n \in \N_0$, 
we define $V_n = V \cap (\Lg_n \times \Lg_n)$. Consider now the maps
\begin{align*}
\varphi_n & : V_n \to \Lg_n, (x,y) \mapsto x, \\
\psi_n & : V_n \to \Lg_n, (x,y) \mapsto y.
\end{align*} 
Denote $\varphi_n(V_n) = V_n^x$ and $\psi_n(V_n) = V_n^y$. We assert that $\tilde{\varphi}_n: V_n \to V_n^x: (x,y) 
\mapsto \varphi_n(x,y)$ is an isomorphism for all $n \geq 2$. It is obvious that the map $\tilde{\varphi}_n$ is linear. 
Surjectivity follows by construction. Suppose that $\tilde{\varphi}_n$ is not injective, then there is a solution 
$(0,0) \neq (0,y) \in V_n$, which means that $[x_2,y] = 0$. Therefore, $y = 0$ and we have a contradiction. 
Analogously, also $\tilde{\psi}_n: V_n \to V_n^y: (x,y) \mapsto \psi_n(x,y)$ is an isomorphism. Hence, for all $n\geq 2$, 
there is a vector space isomorphism $\sigma: V_n^x \to V_n^y$ such that $[x_1,x] + [x_2,\sigma(x)] = 0$ for all $x \in V_n^x$. 
Note that under this isomorphism $\sigma(V_n^x\cap \Lg_{i,n-i}) =V_n^y\cap \Lg_{i+1,n-i-1}$ for all $0 < i <n$.  
Denote further by $\Lg^1=\Lg$ and $\Lg^i=[\Lg,\Lg^{i-1}]$, $i\ge 2$ the terms of the lower central series.

\begin{thm}
Let $K$ be a field of characteristic zero and let $\Lf_{r,c}$ be the free $c$--step nilpotent Lie algebra over $K$ 
on $r$ generators. Then we have $\AID(\Lf_{r,c}) = \Inn(\Lf_{r,c})$.
\end{thm}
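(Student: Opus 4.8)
The plan is to prove the nontrivial inclusion $\AID(\Lf_{r,c})\subseteq \Inn(\Lf_{r,c})$, the reverse inclusion being automatic since $\ad(z)(x)=[z,x]\in[\Lg,x]$. Write $\Lg=\Lf_{r,c}$ with its grading $\Lg=\bigoplus_n\Lg_n$ inherited from the free Lie algebra. The first move is to reduce to \emph{homogeneous} almost inner derivations. For $\la\in K^\times$ let $\theta_\la\in\Aut(\Lg)$ be the grading automorphism with $\theta_\la(v)=\la^n v$ for $v\in\Lg_n$, and decompose a derivation $D=\sum_{k\ge 0}D_k$ into its homogeneous parts, where $D_k(\Lg_n)\subseteq\Lg_{n+k}$. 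Since $\theta_\la D\theta_\la^{-1}=\sum_k\la^k D_k$ and $\AID(\Lg)$ is stable under $\Aut(\Lg)$, for every fixed $x\in\Lg$ the vector $\sum_k\la^k D_k(x)$ lies in the subspace $[\Lg,x]$ for all $\la$; as $K$ is infinite, each coefficient $D_k(x)$ lies in $[\Lg,x]$. Hence every $D_k$ is again almost inner, and it suffices to treat a single homogeneous $D=D_k$ of degree $k$.

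For such a $D$, almost innerness on the generators gives $D(x_i)=[z_i,x_i]$, and taking degree-$k$ parts we may choose $z_i\in\Lg_k$. Subtracting the inner derivation $\ad(z_1)$ we may assume $D(x_1)=0$; the goal becomes to produce $z\in\Lg_k$ with $[z,x_1]=0$ and $[z,x_i]=D(x_i)$ for all $i$, i.e.\ to show each $z_i$ agrees with a common $z$ modulo the centralizer $C_{\Lg}(x_i)=\{w\in\Lg:[w,x_i]=0\}$. The engine is the equation $[x_1,u]+[x_2,v]=0$: testing almost innerness on $x_1+x_j$ produces, for each $j$, a solution $(u,v)\in V$ with $u-v=z_j-z_1$, so that $z_j-z_1$ is controlled by the solution space $V$. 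Since $V$ by itself leaves too much freedom in the truncated algebra, I would extract further constraints by testing $D$ on the whole chain $f_n:=(\ad x_1)^n x_2$. Because $D(x_1)=0$ one has $D(f_n)=(\ad x_1)^n D(x_2)$ and $D(f_n)\in[\Lg,f_n]$; writing $D(f_n)=[c_n,f_n]$ and using $f_{n+1}=[x_1,f_n]$ yields a recursion for the correcting elements $c_n$ of precisely the form treated in Lemma~\ref{lem: induction-lemma}, whose solution determines the $c_n$ up to a polynomial ambiguity in $n$.

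The decisive ingredient is the bigraded analysis of $V$ via $\sigma$. Decomposing a solution $(u,v)\in V_n$ by bidegree, the equation splits as $[x_1,u_{i,n-i}]+[x_2,v_{i+1,n-i-1}]=0$, which is exactly the shift encoded in $\sigma(V_n^x\cap\Lg_{i,n-i})=V_n^y\cap\Lg_{i+1,n-i-1}$. The extreme bidegrees force $u_{n-1,1}\in C_{\Lg}(x_1)$ and $v_{1,n-1}\in C_{\Lg}(x_2)$, and iterating $\sigma$ together with the polynomial control from Lemma~\ref{lem: induction-lemma} propagates this through all intermediate bidegrees in the step-$c$ truncation. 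This pins $z_j-z_1\in C_{\Lg}(x_1)+C_{\Lg}(x_j)$, so a common $z$ exists and $D-\ad(z)$ vanishes on every generator, whence $D=\ad(z)$ is inner. For general $r$ the same computation runs verbatim after replacing the $(x_1,x_2)$-bigrading by the full $\N^r$-multigrading and comparing $x_1$ with each $x_j$ in turn.

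The step I expect to be the main obstacle is exactly this last propagation. After truncating at step $c$ the equation $[x_1,u]+[x_2,v]=0$ acquires solutions that are nonzero in the free Lie algebra but become central in $\Lf_{r,c}$, so the clean free description from \cite{RES} cannot simply be quoted. Showing that every such truncation solution is accounted for by the centralizers once the entire family of test elements $f_n$ is used — that is, making the interaction between the bidegree shift of $\sigma$ and the recursion of Lemma~\ref{lem: induction-lemma} fully precise — is the technical heart of the argument.
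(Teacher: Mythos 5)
Your opening reduction is correct and is genuinely different from the paper's: conjugating by the grading automorphisms $\theta_\la$ and using a Vandermonde argument (valid since $K$ is infinite in characteristic zero) does show that every almost inner derivation splits into homogeneous almost inner pieces. The paper instead inducts on the class $c$: an almost inner $D$ on $\Lf_{r,c+1}$ induces one on $\Lf_{r,c+1}/\Lf_{r,c+1}^{c+1}\cong\Lf_{r,c}$, inner by hypothesis, so after subtracting an inner derivation one may assume $D$ maps all generators into the center $\Lf_{r,c+1}^{c+1}$ --- which is exactly your top-degree homogeneous case. Since a homogeneous almost inner derivation of degree $k<c-1$ descends faithfully to a top-degree one on the quotient $\Lf_{r,k+1}$, both normalizations funnel into the same situation, and your route to it is legitimate.

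Everything after that normalization, however, is a genuine gap --- you flag it yourself as ``the technical heart,'' but the specific mechanism you propose cannot supply it. In the essential top-degree case $D(x_2)$ is central, so $D(f_n)=(\ad x_1)^n D(x_2)=0$ for all $n\ge 1$: the almost-inner condition on your chain $f_n=(\ad x_1)^n x_2$ is vacuous (one may take $c_n=0$), it yields no recursion, and there is no connection to Lemma~\ref{lem: induction-lemma}. The paper's test family is different in an essential way: it consists of the \emph{scalar} perturbations $nx_1+x_2$, $n\in\N$. Almost-innerness produces correcting elements $w_n\in\Lf_{r,c+1}^c$ with $[nx_1+x_2,w_n]=[x_2,w_0]$, and it is the linear appearance of the scalar $n$ in the difference equation $[x_1,(n+1)w_{n+1}-nw_n]+[x_2,w_{n+1}-w_n]=0$, propagated through the bidegrees by $\sigma$ and Lemma~\ref{lem: induction-lemma}, that forces each bidegree component $v_{k-p}(n)$ to be a polynomial in $n$ of exact degree $p$; the bottom bidegree then gives $(n+1)v_1(n+1)-nv_1(n)=0$ for all $n$, whence $kb_{k-1,k-1}=0$, the contradiction that uses characteristic zero. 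Two further points: the truncation problem you worry about (solutions of $[x_1,u]+[x_2,v]=0$ that become central in the nilpotent quotient) is resolved in the paper by noting that the relevant equation lives in the top graded piece $\Lg_{c+1}$ of $\Lf_{r,c+1}$, where nothing has been truncated, so the free-Lie-algebra result of \cite{RES} applies verbatim to the chosen degree-$c$ representatives $w_n$; and your endgame ``so a common $z$ exists'' is itself a gap, since knowing $z_j\in C_{\Lg}(x_1)+C_{\Lg}(x_j)$ for each $j$ separately does not produce a single $z$ working for all $j$ --- the paper avoids this entirely by proving the stronger statement that $D(x_j)=0$ for every $j\ge 2$.
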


\begin{proof}
We prove this theorem by induction on the nilpotency class $c$. The case $c=1$ is clear and the cases $c=2$ and $c=3$ 
were already treated in \cite{BU55}. So let $c \geq 3$ and assume that the theorem holds for $\Lf_{r,c}$.
Consider now $\Lf_{r,c+1}$ with generators $x_1, x_2, \ldots, x_r$. Let $D \in \AID(\Lf_{r,c+1})$ be an almost inner 
derivation of $\Lf_{r,c+1}$. We will prove that $D$ is in fact inner. It is clear that $D$ induces an almost inner 
derivation on 
\[
\Lf_{r,c+1}/\Lf_{r,c+1}^{c+1} \cong \Lf_{r,c}. 
\]
By the induction hypothesis, this is an inner derivation. Hence, by changing $D$ up to an inner derivation, we may 
assume that $D(\Lf_{r,c+1}) \subseteq \Lf_{r,c+1}^{c+1} = Z(\Lf_{r,c+1})$, which means that $D \in \CAID(\Lf_{r,c+1})$. 
Hence, there exists $v \in \Lf_{r,c+1}^c$ such that $D(x_1) = [x_1,v]$. By replacing $D$ by $D + \ad(v)$, we can assume 
that $D$ is an almost inner derivation of $\Lf_{r,c+1}$ with $D(x_1) = 0$. Further, for all $x \in \Lf_{r,c+1}$, we have 
$D(x) = [x, w(x)]$, with $w(x) \in \Lf_{r,c+1}^c$. It suffices to prove that $D(x_i) = 0$ for all $i \in \{2,\ldots,r\}$.
We first look at $x_2$. For each $n \in \N$, there exists a $w_n \in \Lf_{r,c+1}^c$ such that 
\begin{equation}
\label{eq: D(nx_1 + x_2) = [n x_1 + x_2, w_n]}
D(nx_1 + x_2) = [n x_1 + x_2, w_n],
\end{equation} because $D$ is almost inner. We can assume without loss of generality that $w_n$ is a linear combination 
of Lie brackets of length $c$ in the generators (and does not contain a component using Lie brackets of length  $c + 1$).
By linearity, we also have that 
\begin{equation*}
D(nx_1 + x_2) = n D(x_1) + D(x_2) = D(x_2).
\end{equation*}
The two observations above imply that the equation
\begin{equation}
\label{eq: equation with w_0}
[n x_1 + x_2, w_n] = [m x_1 + x_2, w_m] = [x_2, w_0]
\end{equation} holds for all $n,m \in \N$. We consider $[n x_1 + x_2, w_n] + [x_2, -w_0] = 0$ as an equation in the free 
Lie algebra $\Lf_r$ on $r$ generators. For $n \neq 0$, define $x_1' := n x_1 + x_2$. It is clear that $x_1',x_2,\ldots, x_r$ 
is also a free generating set for the free Lie algebra $\Lf_r$. \\[0.2cm]
It follows from \cite[section 5]{RES} that $w_n, w_0 \in \langle x_1', x_2 \rangle = \langle x_1, x_2 \rangle$ 
for all $n \in \N_0$, where $\langle x_1,x_2\rangle$ denotes the Lie algebra generated by $x_1$ and $x_2$. 
This means that $w_n$ can be written as $w_n = \sum_{i = 1}^{c - 1} v_i(n)$, where $v_i(n)$ is a linear combination of Lie 
brackets where $x_2$ and $x_1$ appear $i$ respectively $c - i$ times. We can assume without loss of generality that 
we work in $\Lg$, the free Lie algebra on two generators $x_1$ and $x_2$ (and so $v_i(n)\in \Lg_{c-i,i}$, using the 
notations introduced above this theorem).
To prove that $D(x_2) = 0$, it suffices by equation (\ref{eq: D(nx_1 + x_2) = [n x_1 + x_2, w_n]}) to show that 
$w_0 = 0$. Suppose on the contrary that $w_0 \neq 0$. Define 
\begin{equation*}
k = \max\{i \in \N \mid \exists n \in \N \text{ with } v_i(n) \neq 0\},
\end{equation*} then $w_n = \sum_{i = 1}^k v_i(n)$ and there exists an $n \in \N$ such that $v_k(n) \neq 0$. It follows 
from equation (\ref{eq: equation with w_0}) that $[(n+1)x_1 + x_2, w_{n+1}] = [nx_1 + x_2, w_n]$, which implies that
\begin{equation}
\label{eq: main equation}
[x_1,(n+1)w_{n+1} - n w_n] + [x_2, w_{n+1} - w_n] = 0,
\end{equation} with $n \in \N$.
Note that this consists in fact of several equations (one per bi-degree $(i,j)$ with $i+j=c$).

We will now prove by induction on $p$ that for all $p \in \{0,\ldots,k-1\}$ and all $0\leq i \leq p$, there exist $b_{p,i} \in \Lg_{c-k+p,k-p}$, 
with $b_{p,p} \neq 0$ such that
\begin{equation*}
v_{k - p}(n) = n^p b_{p,p} + n^{p - 1} b_{p,p-1} + \ldots + n b_{p,1} + b_{p,0}.
\end{equation*}

{\it Basis step $p = 0$}: we first consider the component of equation (\ref{eq: main equation}) with in total $k + 1$ 
appearances of $x_2$, i.e.\, the bi-degree $(c-k,k)$--part. This gives
\begin{equation*}
[x_1,0] + [x_2,v_k(n+1) - v_k(n)] = 0.
\end{equation*} Hence, $v_k(n + 1) - v_k(n) = 0$, which means that $v_k(n)$ is a constant $b_{0,0} \neq 0$ and belongs 
to $\Lg_{c-k,k}$. Therefore, we have that $w_n = \left(\sum_{i = 1}^{k - 1} v_i(n) \right) + b_{0,0}$. \\[0.2cm]
% Ik vermoed dat onderstaand deeltje eigenlijk overbodig is.
%	From the component of equation (\ref{eq: main equation}) with in total $k$ appearances of $x_2$, it follows that
%	\begin{equation*}
%	[x_1,(n+1)v_k(n+1) - nv_k(n)] + [x_2,v_{k-1}(n+1) - v_{k-1}(n)] = 0.
%	\end{equation*} Hence, we find that
%	\begin{align*}
%	v_{k-1}(n + 1) - v_{k-1}(n) & = \sigma\big((n+1)v_k(n+1) - n v_k(n)\big)\\
%	& =(n+1) \sigma\big(v_k(n+1)\big) - n \sigma\big(v_k(n)\big)\\
%	& = (n+1) \sigma(a) - n \sigma(a) = \sigma(a),
%	\end{align*} where we use that $\sigma$ is linear. Since $a \neq 0$, we also have that $\sigma(a) \neq 0$. 
%This means (WAAROM?) that $v_{k -1}(n) = n \sigma(a) + b$, with $\sigma(a), b \in \Lg_{c-k+1,k-1}$ (WAAROM?).
{\it Induction step}: we assume that the assertion holds for a given  $p < k - 1$. Hence, there exist 
$b_{p,p}, b_{p,p-1},\ldots, b_{p,0}\in \Lg_{c-k+p,k-p}$ with $b_{p,p} \neq 0$ such that
\begin{equation*}
v_{k-p}(n) = n^p b_{p,p} + n^{p - 1} b_{p,p-1} + \ldots + b_{p,0}.
\end{equation*}
From the component of equation (\ref{eq: main equation}) with $k-p$ appearances of $x_2$, it follows that 
\begin{equation*}
[x_1, (n+1) v_{k-p}(n+1) - n v_{k-p}(n)] + [x_2, v_{k-p-1}(n+1) - v_{k-p-1}(n)] = 0.
\end{equation*} Hence, 
\begin{eqnarray*}
\lefteqn{v_{k-p-1}(n+1) - v_{k-p-1}(n)}\\ 
& =& \sigma\big((n+1) v_{k-p}(n+1) - n v_{k-p}(n) \big)\\
& = &\sigma\big((n+1)^{p+1}b_{p,p} + (n+1)^p b_{p,p-1} + \ldots + (n+1) b_{p,0} \\
& & \quad - n^{p+1} b_{p,p} - n^p b_{p,p-1} - \ldots - n b_{p,0}\big)\\
& = &(n+1)^{p+1} \sigma(b_{p,p}) + (n+1)^p \sigma(b_{p,p-1}) + \ldots + (n+1) \sigma(b_{p,0}) \\
& & \quad - n^{p+1} \sigma(b_{p,p}) - n^p \sigma(b_{p,p-1}) - \ldots - n \sigma(b_{p,0}),
\end{eqnarray*} which can be written as the sum of $n^p (p+1) \sigma(b_{p,p})$ and terms of lower degree. 
Since $b_{p,p} \neq 0$, also $(p+1) \sigma(b_{p,p})\neq 0$ holds. Note that all $\sigma(b_{p,i})$ belong to 
$\Lg_{c-k+p+1,k-p-1}$. Hence, it follows from Lemma \ref{lem: induction-lemma} that there exist 
$b_{p+1,p+1},\ldots, b_{p+1,0}\in\Lg_{c-k+p+1,k-p-1} $ with $b_{p+1,p+1} \neq 0$ such that
\begin{equation*}
v_{k-p-1}(n) = n^{p+1} b_{p+1,p+1} + n^p b_{p+1,p} + \ldots + b_{p+1,0},
\end{equation*}
which concludes the proof of our claim on the form of the $v_{k-p}(n)$. \\[0.2cm]
The above assertion implies that for all $n \in \N$, the equation $v_1(n) = n^{k-1} b_{k-1,k-1} + \ldots + b_{k-1,0}$ 
holds, where $b_{k-1,i} \in \Lg_{c-1,1}$ and $b_{k-1,k-1} \neq 0$. \\[0.2cm]
We now look at the term of equation (\ref{eq: main equation}) with exactly one factor of $x_2$. We then have
\begin{equation*}
[x_1,(n+1)v_1(n+1) - n v_1(n)] + [x_2,0] = 0
\end{equation*} and thus
\begin{equation*}
(n+1) v_1(n+1) - n v_1(n) = 0.
\end{equation*} This implies that
\begin{align*}
0 & = (n+1) \left((n+1)^{k-1} b_{k-1,k-1} + \ldots + b_{k-1,0} \right) - n \left(n^{k-1} b_{k-1,k-1} + \ldots + b_{k-1,0} \right)\\
& = (n+1)^k b_{k-1,k-1} + \ldots + (n+1) b_{k-1,0} - n^k b_{k-1,k-1} - \ldots - n b_{k-1,0}\\
& = k n^{k-1} b_{k-1,k - 1} + \sum_{i=2}^k {k \choose i} n^{k-i} b_{k-1,k-1} + \sum_{i=1}^{k-1} {k-1 \choose i} n^{k-1-i} b_{k-1,k-2} 
+ \ldots + b_{k-1,0}.
\end{align*} Hence, we can write $0$ as a sum of $k n^{k-1} b_{k-1,k - 1}$ and some terms of lower degree. This equation has 
to hold for all $n \in \N$, which implies that $kb_{k-1,k-1} = 0$. Since we work in a field of characteristic zero, this 
gives a contradiction, because $b_{k-1,k-1} \neq 0$. Hence, $w_0 = 0$. It now follows from equation 
(\ref{eq: D(nx_1 + x_2) = [n x_1 + x_2, w_n]}) that $D(x_2) = 0$. By a similar reasoning, we find that 
$D(x_i)=0$ for all $i \in \{3,\ldots,r\}$. This finishes the proof.
\end{proof}

\section{Almost abelian Lie algebras}

The aim of this section is to show the following result.

\begin{thm}
Let $\Lg$ be a finite-dimensional Lie algebra over a field $K$ containing an abelian ideal of codimension one.
Then $\AID(\Lg)=\Inn(\Lg)$.
\end{thm}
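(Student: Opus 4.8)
The plan is to use the structure of an almost abelian Lie algebra explicitly. Let $\Lg$ have an abelian ideal $\La$ of codimension one, so we may write $\Lg = K e \oplus \La$ for some $e \notin \La$, where $\La$ is abelian. The entire bracket is then determined by the single linear map $A = \ad(e)|_\La \in \End(\La)$, since $[\La,\La]=0$. My first step would be to write down, for an arbitrary derivation $D \in \Der(\Lg)$, what the derivation property $D[x,y]=[Dx,y]+[x,Dy]$ forces in terms of $A$. Decompose $D(e) = \la e + a_0$ and $D|_\La$; the derivation conditions on brackets $[e,v]$ for $v\in\La$ will tie $D|_\La$ to $A$ and to $a_0$, essentially saying that $D|_\La$ commutes with $A$ up to the scalar $\la$ and a rank-one correction coming from $a_0$.

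The main idea is to exploit the simple form of the brackets $[\Lg, x]$. For $x = \mu e + v$ with $v \in \La$, one computes $[\Lg, x] = [Ke \oplus \La, \mu e + v] = K\cdot(-Av) + \im(\mu A) + \text{(terms from $[\La, e]$)}$; concretely $[\Lg,x] = \{[\nu e + u,\, \mu e + v] : \nu\in K, u\in\La\} = \{\nu Av - \mu Au : \nu\in K, u\in\La\}$. When $\mu \ne 0$ this is just $\im(A)$ (since $\mu Au$ ranges over all of $\im A$ and the $\nu Av$ term is absorbed), while for $\mu = 0$ it is $K\cdot Av + \im(A) = \im(A)$ as well as long as $Av\in\im A$, which it always is. So for \emph{every} $x=\mu e+v$ with $\mu\neq 0$ we get $[\Lg,x]=\im(A)\subseteq\La$. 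The almost inner condition $D(x)\in[\Lg,x]$ then forces $D(\mu e + v) \in \im(A) \subseteq \La$ for all $\mu \ne 0$, and by varying $\mu$ and subtracting, this pins down $D(e)\in\im(A)$ and $D(v)\in\im(A)$ for all $v$, so in particular $\la = 0$ and $D(\Lg)\subseteq\im(A)\subseteq\La$.

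Next I would handle the elements of $\La$ themselves, i.e.\ the case $\mu = 0$. For $v \in \La$ the condition reads $D(v) \in [\Lg, v] = K\cdot Av = \s\{Av\}$, a \emph{one-dimensional} space (or zero). This is the crucial local constraint: for each individual $v$, $D(v)$ is a scalar multiple of $Av$. The goal is to upgrade this pointwise statement into a \emph{single} global scalar, i.e.\ to show there is one $\be\in K$ with $D(v)=\be\, Av$ for all $v$ simultaneously; together with $D(e)\in\im A$ this will exhibit $D$ as $\ad$ of an appropriate element. The standard trick here is the bilinearity/linearity argument: if $D(v)=c(v)Av$ and $D(w)=c(w)Aw$ with $Av, Aw$ linearly independent, then applying the constraint to $v+w$ and using linearity of $D$ and $A$ forces $c(v)=c(w)$; the degenerate cases where $Av, Aw$ are dependent (including $Av=0$) need separate but easy handling.

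The main obstacle I anticipate is precisely this promotion from the pointwise scalar $c(v)$ to a global constant, which is delicate exactly on the kernel of $A$ and on eigenspaces where $Av$ and $Aw$ may be parallel. The argument must carefully treat: (i) vectors $v$ with $Av=0$, for which the condition gives $D(v)=0$ outright, so $D$ kills $\ker A$; and (ii) the interaction with $D(e)$, since an inner derivation $\ad(w_0 e + a_0)$ acts on $\La$ by $v\mapsto -w_0 Av$ and sends $e$ to $A a_0$, so I must match $\be = -w_0$ and solve $D(e)=A a_0$ using $D(e)\in\im A$. Once the global scalar is established and $D(e)$ is realized as $Aa_0$, the derivation $D - \ad(-\be\, e + a_0)$ vanishes on all of $\Lg$, proving $D$ is inner. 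I expect no characteristic hypothesis is needed beyond what makes the linearity argument work, matching the clean statement of the theorem.
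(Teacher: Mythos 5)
Your proposal is correct, and its engine coincides with the paper's: for $v\in\La$ the almost inner condition pins $D(v)$ to the line $K\cdot Av$, and additivity of $D$ applied to $v+w$ forces the pointwise scalars to agree whenever $Av$ and $Aw$ are linearly independent. Where the two routes genuinely differ is in how the degenerate configurations are neutralized. The paper first puts the matrix of $\ad(t)$ (your $A$) into rational canonical form; the companion-block structure guarantees that any two distinct basis vectors outside $C_{\Lg}(t)$ have linearly independent $A$-images, so the additivity trick is applied only to basis vectors, and the proof finishes with the remark that two linear maps agreeing on a basis are equal---parallel images simply never arise. You work basis-free, which is cleaner conceptually but leaves you owing exactly the cases you flag; they do close easily, as you predict: if $\mathrm{rank}(A)\ge 2$ and $Av$, $Aw$ are parallel and nonzero, pick $u$ with $Au\notin K\cdot Av$ and chain $c(v)=c(u)=c(w)$; if $\mathrm{rank}(A)=1$, write $Av=\xi(v)u_0$ for a functional $\xi$, note $D(\La)\subseteq Ku_0$ and that the resulting functional vanishes on $\ker\xi$ (there $[\Lg,v]=0$ forces $D(v)=0$), hence is proportional to $\xi$, giving $D|_{\La}=\beta A$; rank $0$ is the abelian case. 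With that supplied, your conclusion $D=\ad(\beta e+a_0)$ with $Aa_0=-D(e)$ (solvable precisely because $D(e)\in\im(A)$) is exactly right, and, as in the paper, no hypothesis on $K$ is needed. Two harmless slips: in your second paragraph you briefly assert $[\Lg,v]=\im(A)$ for $v\in\La$ (it is the line $K\cdot Av$, as you yourself use in the following paragraph), and your sign for the action of $\ad(w_0e+a_0)$ on $\La$ is off, which merely replaces $\beta$ by $-\beta$.
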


\begin{proof}
As $\Lg$ has a codimension one abelian ideal, it holds that $\Lg\cong K^n \rtimes_\varphi K$ for some Lie algebra morphism
$\varphi:K \to \Lg\Ll_n(K)$. We use $t$ to denote a basis vector of $K$. With respect to a suitable basis of $K^n$, we may 
assume that $\varphi(t)$ is in rational canonical form. This means that there is a basis 
$e_{i,j}$ ($1\leq i \leq r$, $1\leq j \leq k_i$) of $K^n$ such that 
\[ \varphi(t) =\begin{pmatrix}
C_1 & 0 & \cdots & 0\\
0 & C_2 & \cdots & 0\\
\vdots & \vdots & \ddots & \vdots\\
0 & 0 & \cdots & C_r
\end{pmatrix} \]
is a blocked diagonal matrix where each block $C_i$ is a companion matrix  
\[ 
C_i= \begin{pmatrix}
0 & 0 & \cdots & 0 & -\alpha_0\\
1 & 0 & \cdots & 0 & -\alpha_1\\
0 & 1 & \cdots & 0 & -\alpha_2\\
\vdots & \vdots& \dots & \vdots & \vdots\\
0 & 0 & \cdots & 1 & -\alpha_{k_i-1} 
\end{pmatrix}\]
of a polynomial $q(x)^m=\alpha_0 + \alpha_1 x + \cdots + \alpha_{k_i-1} x^{k_i-1} + x^{k_i}$ , where $q(x)$ is irreducible.
Since $q(x)$ is irreducible, it holds that either $q(x)^m=x^{k_i}$ and hence $\alpha_0=\alpha_1=\cdots=\alpha_{k_i-1}=0$ 
or $\alpha_0\neq 0$. \\[0.2cm]
% MORE INFO ON RATIONAL CANONICAL FORM: See page 308 in I.N.Herstein ``Topics in Algebra''
Now, let $D\in \AID(K^n \rtimes_\varphi K)$. There exists an element $v\in K^n\rtimes_\varphi K$ such that 
$ D(t) = [t, v]$. By replacing $D$ with $D+\ad (v)$, we may assume that $D(t)=0$. \\[0.2cm]
For any vector $e \in K^n $, there exists a scalar $\alpha(e)\in K$, for which it holds that 
\[ D(e)=[ e, \alpha(e) t]. \]
Our aim is to show now that if both $e_{i,j}, e_{p,q}\not \in C_{\Lg}(t)$, then 
\[  
\alpha(e_{i,j})=\alpha(e_{p,q}).
\]
Since we assume that $e_{i,j}, e_{p,q}\not \in C_{\Lg}(t)$, it holds that 
\[ [t,e_{i,j}] = C_i e_{i,j} \neq 0 \mbox{ and } [t,e_{p,q}] = C_p e_{p,q}\neq 0.\]
Moreover by considering several cases we can see that $C_i e_{i,j}$ and $C_p e_{p,q}$ are linearly independent when 
$(i,j) \neq (p,q)$: \\[0.2cm]
{\it Case 1}, $i\neq p$: then $C_i e_{i,j}$ belongs to the span of $e_{i,1}, e_{i,2}, \ldots, e_{i,k_i}$, while 
$C_p e_{p,q}$ belongs to the span of $e_{p,1}, e_{p,2}, \ldots, e_{p,k_p}$, which shows that these vectors are linearly 
independent. \\[0.2cm]
{\it Case 2}, $i=p$: we may assume that $1 \leq j < q \leq k_i$. \\
In case $q<k_i$ we have that $C_i e_{i,j}= e_{i,j+1}$ and $C_i e_{i,q} = e_{i,q+1}$ which are clearly linearly independent.\\
In case $q=k_i$ we have that $C_i e_{i,k_i}= -\alpha_0 e_{i,1} -\alpha_1 e_{i,2} - \cdots - \alpha_{k_i-1} e_{i,k_i}$ 
with $\alpha_0 \neq 0$ (if $\alpha_0=0$, then also $\alpha_1=\cdots = \alpha_{k_i-1}=0$ and $e_{i,k_i}\in C_{\Lg}(t)$). 
Hence we obtain again that $C_i e_{i,j}$ and $C_i e_{q,k_i}$ are linearly independent. \\[0.2cm]
We find that 
\begin{equation}\label{semi1}
D(e_{i,j}+e_{p,q}) = [e_{i,j} + e_{p,q} , \alpha(e_{i,j} + e_{p,q}) t]=
- \alpha(e_{i,j} + e_{p,q}) C_i e_{i,j} - \alpha(e_{i,j} + e_{p,q}) C_p e_{p,q},
\end{equation}
while on the other hand we also have
\begin{equation}\label{semi2}
D(e_{i,j}) + D(e_{p,q}) = [e_{i,j}, \alpha(e_{i,j}) t] + [e_{p,q}, \alpha(e_{p,q}) t]=
-\alpha (e_{i,j} ) C_i e_{i,j} - \alpha(e_{p,q}) C_p e_{p,q}.\end{equation}
Since $\eqref{semi1}$ and $\eqref{semi2}$ must coincide and using the fact that  $C_i e_{i,j}$ and $C_p e_{p,q}$ are linearly 
independent, we finally find that 
\[ \alpha(e_{i,j}) =  \alpha(e_{i,j}+e_{p,q}) = \alpha(e_{p,q}).\]
Now, let $\alpha \in K$ be the fixed value such that $\alpha(e_{i,j}) =\alpha$ when $e_{i,j}\not\in C_{\Lg}(t)$, then we have that 
\[ D(e_{i,j})= [e_{i,j} , \alpha t] \mbox{ for all }1\leq i \leq r,\; 1 \leq j \leq k_i \mbox{ and also } 0= D(t) = [t , \alpha t].\]
It follows that $D$ coincides with $\ad (-\alpha t)$ on all basis vectors and hence $D=\ad (-\alpha t)\in \Inn(\Lg)$. 
\end{proof}

This result cannot be extended to Lie algebras $\Lg$ of the form $\Lg\cong K^n \rtimes K^2$. 
\begin{ex}
Let $n\geq 3$ and consider the Lie algebra $\Lg$  over $K$ with basis $e_1, e_2, \ldots, e_n, s,t $ and non-vanishing 
Lie brackets 
\begin{align*}
[s,e_i] & = e_{i+1},\; 1\leq i \leq n-1, \\
[t,e_i] & =e_{i+2},\; 1\leq i \leq n-2.
\end{align*}
Then we have $\Lg=K^n \rtimes K^2$. Let $D:\Lg\to \Lg$ be defined by 
$\alpha_1 e_1 + \cdots + \alpha_n e_n + \beta s + \gamma t \mapsto \gamma e_n$. Then $D$ is  a derivation. 
Define the map $\varphi_D: \Lg \to \Lg$ by
\[ 
\alpha_1 e_1 + \cdots + \alpha_n e_n + \beta s + \gamma t \mapsto 
\left\{ \begin{array}{ll}
\frac{\gamma}{\beta} e_{n-1} & \mbox{ if } \beta\neq 0\\
e_{n-2} & \mbox{ if } \beta=0.
\end{array} \right.\]
Then we have that for all $v\in \Lg$, $D(v)=[v,\varphi_D(v)]$ showing that $D\in \AID(\Lg)$. It is easy to see that 
$D\not\in \Inn(\Lg)$. Hence we have $\AID(\Lg)\neq \Inn(\Lg)$.
\end{ex}

This result can also not be generalized to Lie algebras of the form $\Lg\cong \Lf_{r,c} \rtimes K$ where 
$\Lf_{r,c}$ is a free nilpotent Lie algebra on $r$ generators and of class $c>1$.

\begin{ex} Let $\Lf_{3,2}$ be the free 2-step nilpotent Lie algebra on 3 generators, then $\Lf_{3,2}$ has a basis 
$x_1,x_2,x_3, y_1,y_2,y_3$ with non-trivial brackets
\[ [x_1,x_2]=y_1,\; [x_1,x_3]= y_2,\; [x_2,x_3]=y_3.\]
Now, add one more generator $t$ and one extra non trivial bracket 
\[ [t,x_1]= y_3,\]
to obtain a 7-dimensional Lie algebra $\Lg=\Lf_{3,2} \rtimes K$. Define $D:\Lg \to \Lg$ by
$$a_1 x_1 + a_2 x_2 + a_3 x_3 + b_1 y_1 + b_2 y_2 + b_3 y_3 + c t \mapsto a_1 (y_1+y_2).$$ Again, it is obvious 
that $D$ is a derivation of $\Lg$. Define $ \varphi_D: \Lg \to \Lg $ by
 \[ 
a_1 x_1 + a_2 x_2 + a_3 x_3 + b_1 y_1 + b_2 y_2 + b_3 y_3 + c t \mapsto 
\left\{ \begin{array}{ll}
x_2 + x_3 + \frac{a_2-a_3}{a_1}t & \mbox{ if } a_1 \neq 0\\
0 & \mbox{ if } a_1 =0.
\end{array} \right.
\]
Then $D(v) = [ v, \varphi_D(v)]$ for all $v\in \Lg$, showing that $D\in \AID(\Lg)$. It is easy to see that 
$D\not\in \Inn(\Lg)$, and so also in this case we have that $\AID(\Lg)\neq \Inn(\Lg)$.
\end{ex}

\section{Filiform nilpotent Lie algebras}

In this section we determine the almost inner derivations for the classes $L_n,Q_n,R_n,W_n$ of filiform nilpotent
Lie algebras discussed in \cite[Chapter 4]{GOZ} and for a family of characteristically nilpotent, 
filiform Lie algebras $\Lf_n$ for $n\ge 13$ introduced in \cite{BU35}. We always assume that $(e_1,\ldots ,e_n)$ is an 
adapted basis, which satisfies $[e_1,e_i]=e_{i+1}$ for all $2\le i\le n-1$. 

\begin{defi}
The Lie algebra $L_n$ for $n\ge 3$ is defined by the Lie brackets
\[
[e_1,e_i]=e_{i+1}, \quad 2\le i\le n-1.
\]
The Lie algebra $Q_n$ for $n\ge 6$ even is defined by the Lie brackets
\begin{align*}
[e_1,e_i] & = e_{i+1}, \quad 2\le i\le n-1,\\
[e_i,e_{n-i+1}] & = (-1)^{i+1}e_n, \quad 2\le i\le \frac{n}{2}.
\end{align*}
The Lie algebra $R_n$ for $n\ge 5$ is defined by the Lie brackets
\begin{align*}
[e_1,e_i] & = e_{i+1}, \quad 2\le i\le n-1,\\
[e_2,e_i] & = e_{i+2}, \quad 3\le i\le n-2.
\end{align*}
The Witt Lie algebra $W_n$  for $n\ge 5$ is defined by the Lie brackets
\begin{align*}
[e_1,e_j] & = e_{j+1}, \quad 2\le j\le n-1,\\[0.1cm]
[e_i,e_j] & = \frac{6(j-i)}{j(j-1)\binom{j+i-2}{i-2}} e_{i+j}, \quad 2\le i\le \frac{n-1}{2},\; i+1\le j\le n-i.
\end{align*}
\end{defi} 

The Witt algebra also has a basis $(f_1,\ldots ,f_n)$ with $[f_i,f_j]=(j-i)f_{i+j}$ for $1\le i+j\le n$, which is
not adapted. The derivation algebras of $L_n,Q_n,R_n,W_n$ have been determined in \cite{GOZ}. Since the algebras are filiform nilpotent, we have $\dim \Inn(\Lg)=n-1$ for all classes. The dimensions of $\Der(\Lg)$ are given as follows:
\begin{align*}
\dim \Der(L_n)& = 2n-1,\\
\dim \Der(Q_n)& =\frac{3n}{2},\\
\dim \Der(R_n)& =2n-3,\\
\dim \Der(W_n) & = n+3.
\end{align*}
We have shown that $\AID(L_n)=\Inn(L_n)$ in \cite[Proposition $7.2$]{BU55} and that 
\[
\AID(R_n)=\Inn(R_n)\oplus \langle E_{n,2}\rangle
\]
in \cite[Proposition $7.4$]{BU55}. Here $E_{ij}$ denotes the linear map which maps $e_j$ to $e_i$ and $e_k$ to $0$ for $k \neq j$. As a matrix, it has an entry $1$ at position $(i,j)$ and zero entries otherwise. Let $x=\sum_{i=1}^n x_ie_i\in Q_n$. Define linear maps in $\End(Q_n)$ by

\begin{align*}
t_0(x) & = x_2 e_n,\\
t_1(x) & = x_1e_1+x_1e_2+\sum_{i=3}^{n-1}(i-2)x_ie_i+(n-3)x_ne_n,\\
t_2(x) & = -x_1 e_2+\sum_{i=2}^{n-1} x_ie_i+2x_ne_n,\\
h_s(x) & = \sum_{i=2}^{n+1-2s}x_ie_{i-1+2s}, \quad 2\le s\le \frac{n}{2}-1.
\end{align*}

A computation shows that these linear maps are derivations of $Q_n$. We have the following result, see \cite{GOZ}.

\begin{prop}
Let $n\ge 6$ even. Then $\{\ad(e_1),\ldots ,\ad(e_{n-1}),t_0,t_1,t_2,h_2,h_3,\ldots ,h_{\frac{n}{2}-1}\}$ is a basis of 
$\Der(Q_n)$.
\end{prop}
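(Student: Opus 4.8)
The plan is to leverage the dimension formula $\dim \Der(Q_n)=\frac{3n}{2}$ quoted above from \cite{GOZ}. A direct count shows the proposed set has exactly $\frac{3n}{2}$ elements: the $n-1$ inner derivations $\ad(e_1),\dots,\ad(e_{n-1})$, the three maps $t_0,t_1,t_2$, and the $\frac{n}{2}-2$ maps $h_2,\dots,h_{\frac{n}{2}-1}$, so that $(n-1)+3+(\frac{n}{2}-2)=\frac{3n}{2}$. Each $\ad(e_j)$ is a derivation by definition, and $t_0,t_1,t_2,h_s$ are derivations by the computation preceding the statement. Hence it suffices to prove that these $\frac{3n}{2}$ maps are linearly independent in $\Der(Q_n)$; combined with the dimension formula this forces them to be a basis.

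To establish linear independence I would assume a vanishing combination
\[
D=\sum_{j=1}^{n-1}\lambda_j\,\ad(e_j)+\mu_0 t_0+\mu_1 t_1+\mu_2 t_2+\sum_{s=2}^{\frac{n}{2}-1}\nu_s h_s=0
\]
and evaluate it on two well-chosen basis vectors. The first step is to apply $D$ to $e_1$. Using $\ad(e_1)(e_1)=0$, $\ad(e_j)(e_1)=-e_{j+1}$ for $2\le j\le n-1$, $t_1(e_1)=e_1+e_2$, $t_2(e_1)=-e_2$ and $t_0(e_1)=h_s(e_1)=0$, one obtains
\[
D(e_1)=\mu_1 e_1+(\mu_1-\mu_2)e_2-\sum_{j=2}^{n-1}\lambda_j e_{j+1}.
\]
Since $e_1,e_2,\dots,e_n$ are linearly independent, reading off the coefficients of $e_1,e_2$ and of $e_3,\dots,e_n$ yields in one stroke $\mu_1=\mu_2=0$ and $\lambda_2=\cdots=\lambda_{n-1}=0$.

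The second step is to apply the surviving part $\lambda_1\,\ad(e_1)+\mu_0 t_0+\sum_s\nu_s h_s$ to $e_2$. Here $\ad(e_1)(e_2)=e_3$, $t_0(e_2)=e_n$ and $h_s(e_2)=e_{2s+1}$, so that
\[
D(e_2)=\lambda_1 e_3+\mu_0 e_n+\sum_{s=2}^{\frac{n}{2}-1}\nu_s e_{2s+1}.
\]
The crucial point is that the indices occurring here are pairwise distinct: the index $3$, the odd indices $5,7,\dots,n-1$ produced by the $h_s$, and the even index $n$ never collide. Hence $\lambda_1=\mu_0=\nu_2=\cdots=\nu_{\frac{n}{2}-1}=0$, completing the proof of linear independence.

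I do not expect a serious obstacle: once the actions of the $\ad(e_j)$, $t_i$ and $h_s$ on $e_1$ and $e_2$ are recorded, the rest is pure linear algebra. The only delicate point — and precisely the reason the family $h_s$ stops at $s=\frac{n}{2}-1$ — is the distinctness of the indices appearing in $D(e_2)$, which is exactly where the parity of the even integer $n$ enters. Choosing $e_1$ first (to kill $t_1,t_2$ and all inner terms with $j\ge 2$) and $e_2$ second (to separate $\ad(e_1)$, $t_0$ and the $h_s$ by the index of their images) is what keeps the argument transparent; testing on a single vector would instead route several contributions, via the companion brackets $[e_i,e_{n-i+1}]=(-1)^{i+1}e_n$ and via $t_0$, into $e_n$ and blur the separation.
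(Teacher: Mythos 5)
Your linear independence computation is correct and complete: evaluating a vanishing combination at $e_1$ kills $\mu_1$, $\mu_2$ and $\lambda_2,\dots,\lambda_{n-1}$ (the special brackets $[e_i,e_{n-i+1}]=(-1)^{i+1}e_n$ never involve $e_1$, so indeed $\ad(e_j)(e_1)=-e_{j+1}$ for $j\ge 2$), and evaluating the remainder at $e_2$ separates $e_3$, the odd-indexed vectors $e_5,e_7,\dots,e_{n-1}$ produced by the $h_s$, and $e_n$, which for even $n$ are pairwise distinct. The gap is the spanning half. The dimension formula $\dim \Der(Q_n)=\frac{3n}{2}$ is not an independent input: once the listed maps are known to be independent derivations, that formula is exactly equivalent to the assertion that they span, i.e.\ to the proposition itself. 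Moreover it cannot honestly be ``quoted from \cite{GOZ}'': the paper explicitly notes that the formulation and proof in \cite{GOZ} are mistaken on precisely this point --- the component $d_{n-2}$ of a general derivation, which is the map $t_0$, is claimed there to be zero --- so the determination in \cite{GOZ} yields a spanning set, and hence a dimension count, missing $t_0$. The value $\frac{3n}{2}$ in the paper's table is already the corrected count, read off from the corrected basis. Your argument therefore either rests on \cite{GOZ}'s figure, in which case your $\frac{3n}{2}$ independent derivations contradict it (that contradiction is how one detects the error in \cite{GOZ}, but it proves nothing about $\Der(Q_n)$), or on the paper's corrected figure, which is circular.

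What is missing is an actual spanning argument: take an arbitrary $D\in\Der(Q_n)$, use $D(e_{i+1})=[D(e_1),e_i]+[e_1,D(e_i)]$ to see that $D$ is determined by $D(e_1)$ and $D(e_2)$, and then impose the remaining relations, in particular $[e_i,e_{n-i+1}]=(-1)^{i+1}e_n$ and the vanishing of all other brackets, to show that after subtracting a suitable combination of $\ad(e_1),\dots,\ad(e_{n-1}),t_0,t_1,t_2,h_2,\dots,h_{\frac{n}{2}-1}$ the derivation $D$ is zero. This is the computation carried out (with the one error) in \cite{GOZ}, and it --- not the independence count --- is what the proposition rests on; note that the paper itself gives no proof beyond that citation together with the correction. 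Your computation remains valuable as a complement, since it is the cleanest way to see that $t_0$ genuinely enlarges the span and so had to be added to the basis claimed in \cite{GOZ}, but on its own it establishes only the easy inclusion.
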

Note that there is a mistake in the formulation and proof in \cite{GOZ}, since the map $t_0$ is not taken into account although it is a derivation. It corresponds with the map $d_{n-2}$ of the proof, which is not zero as is claimed there. It is easy to see then that every almost inner derivation of $Q_n$ is inner.

\begin{prop}
Let $n\ge 6$ even. Then $\AID(Q_n)=\Inn(Q_n)$.
\end{prop}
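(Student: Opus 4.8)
The plan is to use the explicit basis of $\Der(Q_n)$ provided by the preceding proposition, namely $\{\ad(e_1),\ldots,\ad(e_{n-1}),t_0,t_1,t_2,h_2,\ldots,h_{\frac{n}{2}-1}\}$, and to determine which linear combinations of the outer generators $t_0,t_1,t_2,h_2,\ldots,h_{\frac{n}{2}-1}$ can possibly be almost inner. Since $\Inn(Q_n)\subseteq\AID(Q_n)$ always holds, it suffices to show that any $D\in\AID(Q_n)$ that is a combination of the outer derivations $t_0,t_1,t_2,h_s$ must in fact be inner, i.e.\ that the only almost inner combination is $0$ modulo $\Inn(Q_n)$. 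So I would write a general outer derivation as $D=a_0t_0+a_1t_1+a_2t_2+\sum_{s=2}^{\frac{n}{2}-1}c_sh_s$ and impose the almost inner condition $D(x)\in[Q_n,x]$ for suitably chosen basis elements $x$.

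The key computational step is to understand the spaces $[Q_n,x]$ for the relevant basis vectors $x=e_i$. Using the adapted basis relations $[e_1,e_i]=e_{i+1}$ and $[e_i,e_{n-i+1}]=(-1)^{i+1}e_n$, I would compute $[Q_n,e_i]$ explicitly: for a generic $e_i$ with $2\le i\le n-1$, the bracket $[Q_n,e_i]$ is spanned by $e_{i+1}$ (coming from $\ad(e_1)$) together with at most one further vector in the direction of $e_n$ (coming from the pairing bracket). The crucial leverage comes from testing $D$ on $e_1$ and on the early generators $e_2,e_3$: the condition $D(e_1)\in[Q_n,e_1]=\langle e_3,e_4,\ldots,e_n\rangle$ already constrains the coefficients, since $t_1(e_1)=e_1+e_2$ has components outside this span, forcing $a_1=0$, and similarly evaluating on $e_2$ and on generic $e_i$ will force the $h_s$-coefficients and $a_2$ to vanish. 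I expect $t_0$ to survive these constraints (its image lies in the center $\langle e_n\rangle$), so the analysis will show $D\equiv a_0 t_0$ modulo the inner derivations, and then the final task is to check whether $a_0t_0$ itself is inner or can be absorbed.

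The main obstacle will be handling the map $t_0$, whose image lands in the one-dimensional center $Z(Q_n)=\langle e_n\rangle$: such central-valued maps are the hardest to rule out because $e_n\in[Q_n,x]$ for many $x$, so the pointwise almost-inner condition is easy to satisfy locally while global consistency (a single additive derivation realized as $x\mapsto[x,w(x)]$) may still fail. Here I would exploit the structure of $\CAID$ and the specific pairing $[e_i,e_{n-i+1}]=(-1)^{i+1}e_n$: for $t_0$ to be almost inner one needs, for each $x$, an element $w(x)$ with $[x,w(x)]=x_2e_n$, and I would test this simultaneously on elements like $e_2$ and $e_1+e_2$ where the required $w$-values become incompatible, or else show directly that $t_0=\ad(-\tfrac12 e_{n-1})$ (or some explicit inner derivation) so that $t_0\in\Inn(Q_n)$ after all. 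Verifying this last identification—pinning down the exact inner derivation that equals $t_0$, using the pairing relation $[e_2,e_{n-1}]=e_n$—is the delicate point, and once it is settled the conclusion $\AID(Q_n)=\Inn(Q_n)$ follows immediately.
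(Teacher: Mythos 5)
Your elimination of $t_1$, $t_2$ and the $h_s$ is sound and is essentially what the paper does: writing $D=a_0t_0+a_1t_1+a_2t_2+\sum_s c_sh_s$, the evaluation $D(e_1)=a_1e_1+(a_1-a_2)e_2$ must lie in $[Q_n,e_1]=\langle e_3,\ldots,e_n\rangle$, forcing $a_1=a_2=0$, and then $D(e_2)=a_0e_n+\sum_s c_se_{2s+1}$ must lie in $[Q_n,e_2]=\langle e_3,e_n\rangle$, forcing all $c_s=0$. The genuine gap is exactly where you locate the difficulty: the coefficient $a_0$ of $t_0$. You leave this case unresolved, hedging between two alternatives, and neither of them works as stated.

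First, the fallback claim that $t_0$ is inner (e.g.\ $t_0=\ad(-\tfrac{1}{2}e_{n-1})$) is false: one computes $\ad(-\tfrac{1}{2}e_{n-1})(x)=\tfrac{1}{2}(x_1-x_2)e_n\neq x_2e_n$, and no inner derivation can equal $t_0$ at all, since $\ad(y)(e_1)=0$ forces $y\in\langle e_1,e_n\rangle$, whence $\ad(y)(e_2)\in\langle e_3\rangle$, never $e_n$. (This is also immediate from the preceding proposition, which lists $t_0$ as a basis vector complementary to $\ad(e_1),\ldots,\ad(e_{n-1})$.) Second, your other branch names the right test element but proposes the wrong mechanism: deriving a contradiction from the witnesses $w(e_2)$ and $w(e_1+e_2)$ being ``incompatible'' cannot work, because in the definition of almost inner the witness $w(x)$ may depend arbitrarily on $x$, so there is no cross-element consistency to violate. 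What is needed --- and what the paper's proof supplies --- is a single element at which the containment fails pointwise: $t_0(e_1+e_2)=e_n$, while for every $x\in Q_n$ one has $[e_1+e_2,x]=(x_2-x_1)e_3+\sum_{i=4}^{n-1}x_{i-1}e_i$, because the $e_n$-contributions cancel exactly ($[e_1,e_{n-1}]=e_n$ while $[e_2,e_{n-1}]=(-1)^{3}e_n=-e_n$). Hence $e_n\notin[Q_n,e_1+e_2]$ and $a_0=0$. This cancellation is the crux of the whole proof, and it also explains why your intuition that $t_0$ ``survives'' tests on basis vectors is correct: $e_n\in[Q_n,e_i]$ for every single $i\leq n-1$, so only a non-basis element such as $e_1+e_2$ can detect the failure. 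Without this computation your argument cannot conclude, and indeed as written you expect $t_0$ to be almost inner, which is the opposite of the truth.
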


\begin{proof}
Take an arbitrary $\varphi \in \langle t_0, t_1, t_2, h_s \mid 2 \leq s \leq \frac{n}{2} -1 \rangle$, then there exist values $\alpha_0, \alpha_1, \alpha_2, \beta_s$ (with $2 \leq s \leq \frac{n}{2} - 1$) such that
\begin{equation*}
\varphi = \alpha_0 t_0 + \alpha_1 t_1 + \alpha_2 t_2 + \sum_{s = 2}^{\frac{n}{2} - 1} \beta_s h_s.
\end{equation*} Suppose that $\varphi \in \AID(Q_n)$. 
For $x=\sum_{i=1}^n x_ie_i\in Q_n$ we have
\[
[e_1+e_2,x]=(x_2-x_1)e_3+\sum_{i=4}^{n-1}x_{i-1}e_i.
\]
Since $\varphi(e_1 + e_2) = \alpha_0 e_n + \alpha_1 (e_1 + e_2) + \sum_{s = 2}^{\frac{n}{2} - 1} \beta_s e_{2s + 1}$, we must have that $\alpha_0 = \alpha_1 = 0$.\\
Moreover, $\varphi(e_2) = \alpha_2 e_2 + \sum_{s = 2}^{\frac{n}{2} - 1} \beta_s e_{2s + 1}$, but $[e_2,Q_n] = \langle e_3, e_n \rangle$, which means that $\alpha_2 = \beta_s = 0$ (for all $2 \leq s \leq \frac{n}{2} - 1$). Hence, the only almost inner derivation in $\langle t_0, t_1, t_2, h_s \mid 2 \leq s \leq \frac{n}{2} - 1 \rangle$ is $\varphi = 0$.
\end{proof}

For the Witt algebra $W_n$ define linear maps by
\begin{align*}
t_1(x) & = x_2 e_n,\\
t_2(x) & = x_2e_{n-1}+x_3e_n,\\
t_3(x) & = x_2 e_{n-2}+x_3e_{n-1}+x_4 e_n,\\
h(x) & = \sum_{i=1}^{n}ix_ie_{i}.
\end{align*}

We have the following result, see \cite{GOZ}.

\begin{prop}\label{4.4}
Let $n\ge 5$. Then $\{\ad(e_1),\ldots ,\ad(e_{n-1}),t_1,t_2,t_3,h\}$ is a basis of $\Der(W_n)$.
\end{prop}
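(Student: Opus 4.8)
The plan is to exploit the natural $\Z$-grading on $W_n$ for which $e_i$ is homogeneous of degree $i$; this grading is compatible with the bracket, since $[e_i,e_j]\in\langle e_{i+j}\rangle$ (with the convention $e_k=0$ for $k>n$). Relative to it, every map in the proposed list is homogeneous: $\ad(e_j)$ has degree $j$ for $1\le j\le n-1$, the map $h$ has degree $0$, and $t_1,t_2,t_3$ have degrees $n-2,n-3,n-4$ respectively. Because the value $\dim\Der(W_n)=n+3$ is already recorded (from \cite{GOZ}) and the list has exactly $n+3$ members, it suffices to prove two things: that $t_1,t_2,t_3,h$ are derivations, and that the whole list is linearly independent. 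Independence together with the matching dimension then forces the list to be a basis.

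First I would verify that the four explicit maps are derivations. For $h$ this is immediate, since $h$ is the grading (Euler) derivation: $h([e_i,e_j])=(i+j)[e_i,e_j]=[h(e_i),e_j]+[e_i,h(e_j)]$. For $t_1,t_2,t_3$ I would check the Leibniz rule bidegree by bidegree. The key observation is that each $t_k$ sends a few of the lowest basis vectors into the top-degree subspace $\langle e_{n-2},e_{n-1},e_n\rangle$, on which all further brackets vanish; hence $t_k([e_i,e_j])=0$ outside the handful of bidegrees $(1,2),(1,3),(1,4)$, where the identity reduces to the adapted relations $[e_1,e_{n-k}]=e_{n-k+1}$. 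In particular the awkward structure constants of $W_n$ never really intervene, so this step is routine.

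The heart of the argument is linear independence, which I would organise through the grading: any linear relation among homogeneous maps splits into one relation per degree. In degree $0$ only $h$ appears, and $h\neq0$. In the degrees carrying a single member we meet only inner derivations $\ad(e_j)$, and these are independent because $\ker(\ad)=Z(W_n)=\langle e_n\rangle$ while $e_1,\dots,e_{n-1}$ are independent modulo the centre. The only degrees carrying two members are $n-2,n-3,n-4$, where $t_k$ shares its degree with $\ad(e_{n-1-k})$ for $k=1,2,3$. There I would separate the two by evaluating on $e_1$: since $t_k(e_1)=0$ whereas $\ad(e_{n-1-k})(e_1)=[e_{n-1-k},e_1]=-e_{n-k}\neq0$, they cannot be proportional, so each such graded piece is genuinely $2$-dimensional. (This separation via $e_1$ is valid once $n-1-k\ge2$, hence for all three collisions as soon as $n\ge6$; the single remaining borderline instance, $k=3$ at $n=5$, must be examined directly in the small algebra.)

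Having produced $n+3$ linearly independent derivations inside the $(n+3)$-dimensional space $\Der(W_n)$, I would conclude at once that they form a basis. I expect the only real obstacle to be handling the three ``collision'' degrees $n-2,n-3,n-4$ correctly, where an inner derivation and a $t_k$ have the same degree; everything else is bookkeeping that the grading keeps finite, and the nontrivial dimension count $\dim\Der(W_n)=n+3$ is imported from \cite{GOZ} rather than reproved.
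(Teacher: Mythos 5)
Your route is, in substance, more than the paper itself offers: the paper gives no argument for this proposition at all, citing \cite{GOZ} both for $\dim\Der(W_n)=n+3$ and for the basis, so a proof has to be supplied from scratch (importing the dimension count is legitimate and not circular, since it is quoted as an independent fact). Your grading argument does this cleanly for $n\ge 6$: the listed maps are homogeneous of degrees $1,\dots,n-1$, $0$, and $n-2,n-3,n-4$, any linear relation splits into homogeneous parts, and in the three collision degrees evaluation at $e_1$ separates $t_k$ from $\ad(e_{n-1-k})$, since $[e_{n-1-k},e_1]=-e_{n-k}\neq 0$ while $t_k(e_1)=0$. One small correction to your derivation check: it is not true that all brackets vanish on $\langle e_{n-2},e_{n-1},e_n\rangle$ --- indeed $[e_1,e_{n-2}]=e_{n-1}$, $[e_1,e_{n-1}]=e_n$ and $[e_2,e_{n-2}]\neq 0$. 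What you actually use is that every term $[t_k(e_i),e_j]$ with $j\ge 2$ that can appear in a Leibniz identity vanishes (the one dangerous bracket, $[e_{n-2},e_2]$, would require $i=j=2$), so only the bidegrees involving $e_1$ are nontrivial, and there the adapted relations close the identity.

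The genuine gap is the case you defer: $k=3$ at $n=5$ is not a routine small-algebra verification, it is precisely where the claim breaks. In $W_5$ the only nonzero brackets are $[e_1,e_2]=e_3$, $[e_1,e_3]=e_4$, $[e_1,e_4]=e_5$ and $[e_2,e_3]=e_5$, so $\ad(e_1)$ sends $e_2\mapsto e_3$, $e_3\mapsto e_4$, $e_4\mapsto e_5$ and kills $e_1$ and $e_5$; this is exactly the map $t_3(x)=x_2e_3+x_3e_4+x_4e_5$. Hence $t_3=\ad(e_1)$ in $W_5$, the proposed family consists of only seven distinct maps, and it cannot be a basis of the eight-dimensional $\Der(W_5)$. (The dimension $n+3=8$ is still correct; the derivation missed by the list is, for instance, the degree-one map $E$ with $E(e_1)=e_2$, $E(e_4)=e_5$, $E(e_i)=0$ otherwise, whose value $e_2$ at $e_1$ is visibly outside the span of the values of the listed maps at $e_1$.) So your argument proves the proposition only for $n\ge 6$, and no argument can close $n=5$: with the paper's definition of $t_3$, the statement is false there --- an edge-case defect of the paper that your deferred check, had you carried it out, would have exposed. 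As written, postponing that case without examining it leaves the proof incomplete at exactly its breaking point.
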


From this we obtain the following result.

\begin{prop}\label{4.5}
Let $n\ge 9$. Then $\AID(W_n)=\Inn(W_n)\oplus \langle t_1,t_2,t_3 \rangle$. 
\end{prop}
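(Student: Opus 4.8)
The plan is to combine the explicit basis of $\Der(W_n)$ from Proposition \ref{4.4} with the fact that every inner derivation is almost inner, so that $\Inn(W_n)=\langle\ad(e_1),\ldots,\ad(e_{n-1})\rangle\subseteq\AID(W_n)$. Since $\{\ad(e_1),\ldots,\ad(e_{n-1}),t_1,t_2,t_3,h\}$ is a basis of $\Der(W_n)$ and $\AID(W_n)$ is a subspace containing $\Inn(W_n)$, proving the proposition reduces to two independent tasks: showing $t_1,t_2,t_3\in\AID(W_n)$, and showing that no derivation of the form $D=\alpha_1t_1+\alpha_2t_2+\alpha_3t_3+\mu h$ with $\mu\neq 0$ is almost inner. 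For the latter it is enough, once $t_1,t_2,t_3$ are known to lie in $\AID(W_n)$, to check that $h\notin\AID(W_n)$; then $\mu h=D-\alpha_1t_1-\alpha_2t_2-\alpha_3t_3\in\AID(W_n)$ would force $\mu=0$.

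The exclusion of $h$ is immediate from a single test vector. Evaluating on $x=e_1$ we get $t_1(e_1)=t_2(e_1)=t_3(e_1)=0$ (each $t_k$ only involves the coordinates $x_2,x_3,x_4$), while $h(e_1)=e_1$; and from $[e_k,e_1]=-e_{k+1}$ for $2\le k\le n-1$ we have $[W_n,e_1]=\langle e_3,e_4,\ldots,e_n\rangle$. Since $e_1\notin\langle e_3,\ldots,e_n\rangle$, the requirement $D(e_1)=\mu e_1\in[W_n,e_1]$ forces $\mu=0$. In particular $h\notin\AID(W_n)$, so $\AID(W_n)/\Inn(W_n)\subseteq\langle t_1,t_2,t_3\rangle$.

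The substantial part is to exhibit, for each $k\in\{1,2,3\}$, a (not necessarily linear) map $\varphi_k\colon W_n\to W_n$ with $t_k(x)=[\varphi_k(x),x]$ for all $x$. The structural facts driving this are that $e_{n-2},e_{n-1},e_n$ lie near the center (with $e_n\in Z(W_n)$), that $[e_{n-1},x]=-x_1e_n$ and $[e_{n-2},x]=-x_1e_{n-1}-\tfrac{6(n-4)}{(n-2)(n-3)}x_2e_n$, and more generally that bracketing $x$ against $e_{n-j}$ yields a vector supported on $e_{n-j+1},\ldots,e_n$ whose coordinates are explicit structure constants times the $x_i$. For $t_1(x)=x_2e_n$ this already suffices: take $\varphi_1(x)=-\tfrac{x_2}{x_1}e_{n-1}$ when $x_1\neq 0$ and $\varphi_1(x)=-\tfrac{(n-2)(n-3)}{6(n-4)}e_{n-2}$ when $x_1=0$, using $[e_{n-1},x]$ and $[e_{n-2},x]$ respectively. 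For $t_2$ and $t_3$ one solves, in the same spirit, a small triangular linear system for the coefficients of $\varphi_k(x)$ along $e_{n-2},e_{n-3},e_{n-4},\ldots$, splitting into cases according to which of the leading coordinates $x_1,x_2,\ldots$ vanish (vanishing of a leading coordinate both drops a term and forces bracketing against the next element $e_{n-j-1}$).

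The main obstacle is precisely this case analysis for $t_3(x)=x_2e_{n-2}+x_3e_{n-1}+x_4e_n$: realizing the summand $x_4e_n$ requires the structure constant $[e_4,e_{n-4}]=\tfrac{6(n-8)}{(n-4)(n-5)\binom{n-2}{2}}e_n$ to be nonzero, together with the brackets $[e_4,\cdot]$ being of the generic ``second family'' type, i.e.\ $4\le\frac{n-1}{2}$. Both conditions hold exactly when $n\ge 9$, which is where the hypothesis enters; for $n=8$ the constant vanishes and $t_3$ fails to be almost inner. Once $\varphi_1,\varphi_2,\varphi_3$ are verified on all cases we obtain $\langle t_1,t_2,t_3\rangle\subseteq\AID(W_n)$, and combining this with the previous paragraph yields $\AID(W_n)=\Inn(W_n)\oplus\langle t_1,t_2,t_3\rangle$.
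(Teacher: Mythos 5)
Your proposal is correct and takes essentially the same route as the paper: the paper likewise excludes $h$ via $h(e_1)=e_1\notin[e_1,W_n]$, reduces to the basis of Proposition \ref{4.4}, and proves $t_1,t_2,t_3\in\AID(W_n)$ by exhibiting case-defined maps $\phi_{t_k}$ (your $\varphi_k$, up to the order of the bracket) built exactly from the triangular system and the case analysis on vanishing leading coordinates that you describe, with $n\ge 9$ entering through the well-definedness and nonvanishing of $c_{4,n-4}$. The only difference is one of completeness, not of method: the paper writes out the explicit formulas for $\phi_{t_2}$ (three cases) and $\phi_{t_3}$ (four cases $\rho_1,\ldots,\rho_4$), which your sketch leaves as a routine computation.
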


\begin{proof}
The derivation $h$ is not almost inner, because $h(e_1)=e_1\not\in [e_1,W_n]$. We need to show that
$t_1,t_2,t_3$ are almost inner for all $n\ge 9$. Then the claim follows by Proposition $\ref{4.4}$.
Let us write $[e_i,e_j]=c_{i,j}e_{i+j}$, $2\le i<j\le n-i$ for the coefficients appearing in the Lie brackets of $W_n$.
Let $x=\sum_{i=1}^n x_ie_i\in W_n$. Define a map $\phi_{t_1}\in \End(W_n)$ by
\[
\phi_{t_1}(x)=\begin{cases} \frac{x_2}{x_1}e_{n-1}, \hspace{0.53cm} \text{ for } x_1\neq 0,\\[0.1cm]
\frac{1}{c_{2,n-2}}e_{n-2},  \text{ for } x_1= 0. 
\end{cases}
\]
Here $n-2\ge 3$ in $c_{2,n-2}$ since $n\ge 5$. We claim that $t_1(x)=[x,\phi_{t_1}(x)]$ for all $x\in W_n$, so that 
$t_1$ is almost inner. Indeed, for $x_1\neq 0$ we have $[x,\frac{x_2}{x_1}e_{n-1}]=x_2[e_1,e_{n-1}]=x_2e_n=t_1(x)$. For
$x_1=0$ we also have $[x, \frac{1}{c_{2,n-2}}e_{n-2}]=x_2\frac{1}{c_{2,n-2}}[e_2,e_{n-2}]=x_2e_n$. \\
For $n\ge 7$ we define a map $\phi_{t_2}\in \End(W_n)$ by
\[
\phi_{t_2}(x)=\begin{cases} \frac{x_2}{x_1}e_{n-2}+(\frac{x_3}{x_1}-\frac{c_{2,n-2}x_2^2}{x_1^2})e_{n-1}, 
\hspace{0.61cm} \text{ for } x_1\neq 0,\\[0.1cm]
\frac{1}{c_{2,n-3}}e_{n-3}+\frac{(c_{2,n-3}-c_{3,n-3})x_3}{c_{2,n-2}c_{2,n-3}x_2}e_{n-2}, \text{ for } x_1= 0,\, x_2\neq 0, \\[0.1cm]
\frac{1}{c_{3,n-3}}e_{n-3},  \hspace{3.85cm} \text{ for } x_1= 0,\, x_2=0.
\end{cases}
\]
This is well-defined for the $c_{i,j}$ since $n\ge 7$. We claim that $t_2(x)=[x,\phi_{t_2}(x)]$ for all $x\in W_n$, so that 
$t_2$ is almost inner for all $n\ge 7$. Indeed, for $x_1\neq 0$ we have
\begin{align*}
[x, \frac{x_2}{x_1}e_{n-2}+\left(\frac{x_3}{x_1}-\frac{c_{2,n-2}x_2^2}{x_1^2}\right)e_{n-1}] & = x_2[e_1,e_{n-2}]+\left( x_3-c_{2,n-2}
\frac{x_2^2}{x_1}\right)[e_1,e_{n-1}]+\frac{x_2^2}{x_1}[e_2,e_{n-2}] \\
 & = x_2e_{n-1}+x_3e_n-c_{2,n-2}\frac{x_2^2}{x_1}e_n+c_{2,n-2}\frac{x_2^2}{x_1}e_n\\
 & = t_2(x).
\end{align*}
For $x_1=0$ and $x_2\neq 0$ we have
\begin{align*}
[x,\frac{1}{c_{2,n-3}}e_{n-3}+\frac{(c_{2,n-3}-c_{3,n-3})x_3}{c_{2,n-2}c_{2,n-3}x_2}e_{n-2}] & = x_2\frac{1}{c_{2,n-3}}[e_2,e_{n-3}]+
\frac{c_{2,n-3}-c_{3,n-3}}{c_{2,n-2}c_{2,n-3}}x_3[e_2,e_{n-2}] \\
 & + \frac{x_3}{c_{2,n-3}}[e_3,e_{n-3}] \\
 & = x_2e_{n-1}+x_3\left( \frac{c_{2,n-3}-c_{3,n-3}}{c_{2,n-3}}+\frac{c_{3,n-3}}{c_{2,n-3}}\right) e_n \\
 & = t_2(x).
\end{align*}
Finally, for $x_1=x_2=0$ we have
\[
[x,\frac{1}{c_{3,n-3}}e_{n-3}]=\frac{x_3}{c_{3,n-3}}[e_3,e_{n-3}]=x_3e_n=t_2(x).
\]
For $n\ge 9$ we define a map $\phi_{t_3}\in \End(W_n)$ by 
\[
\phi_{t_3}(x)=\begin{cases} \rho_1(x), \text{ for } x_1\neq 0,\\
\rho_2(x), \text{ for } x_1=0,\, x_2\neq 0,\\
\rho_3(x), \text{ for } x_1=x_2=0,\, x_3\neq 0,\\
\rho_4(x), \text{ for } x_1=x_2=x_3=0,\\
\end{cases}
\]
with
\begin{align*}
\rho_1(x) & = \frac{x_2}{x_1}e_{n-3} +\left(\frac{x_3}{x_1}-\frac{c_{2,n-3}x_2^2}{x_1^2}\right)e_{n-2}+ \left(\frac{x_4}{x_1}-
\frac{(c_{2,n-2}+c_{3,n-3})x_2x_3}{x_1^2}+\frac{c_{2,n-2}c_{2,n-3}x_2^3}{x_1^3}\right)e_{n-1}, \\[0.1cm]
\rho_2(x) & = \frac{1}{c_{2,n-4}}e_{n-4} +\frac{(c_{2,n-4}-c_{3,n-4})x_3}{c_{2,n-3}c_{2,n-4}x_2}e_{n-3} \\[0.1cm]
 & + \left( \frac{(c_{2,n-4}-c_{4,n-4})x_4}{c_{2,n-2}c_{2,n-4}x_2}-\frac{(c_{2,n-4}-c_{3,n-4})c_{3,n-3}x_3^2}{c_{2,n-2}c_{2,n-3}c_{2,n-4}x_2^2}
\right)e_{n-2}, \\[0.1cm]
\rho_3(x) & = \frac{1}{c_{3,n-4}}e_{n-4}  + \frac{(c_{3,n-4}-c_{4,n-4})x_4}{c_{3,n-3}c_{3,n-4}x_3}e_{n-3}, \\[0.1cm]
\rho_4(x) & = \frac{1}{c_{4,n-4}}e_{n-4} .
\end{align*}
This is well-defined since $n\ge 9$. It is straightforward to see that $t_3(x)=[x,\phi_{t_3}(x)]$ for 
all $x\in W_n$, so that $t_3$ is almost inner for all $n\ge 9$. This finishes the proof.
\end{proof}

\begin{rem}
For $k=5,6$ we have $\AID(W_k)=\Inn(W_k)\oplus \langle t_1 \rangle$ and for $k=7,8$ we have $\AID(W_k)=\Inn(W_k)\oplus 
\langle t_1,t_2 \rangle$. 
\end{rem}

In \cite{BU35} we have introduced a family of filiform nilpotent Lie algebras $\Lf_n$ for $n\ge 13$. 
They are closely related to the Witt algebras $W_n$. The Lie brackets are defined as follows:

\begin{align*}
[e_1,e_i] & =e_{i+1}, \quad i=2,\dots ,n-1 \\[0.3cm]
[e_i,e_j] & =\sum_{r=1}^n\biggl(\;\sum_{\ell=0}^{\lfloor \frac{j-i-1}{2}\rfloor} (-1)^\ell
\binom{j-i-\ell-1}{\ell}\al_{i+\ell,\, r-j+i+2\ell+1}\biggr)e_r,
 \quad 2 \le i<j \le n,
\end{align*}

with parameters $\al_{k,s}$ for $1\le k,s\le n$, which are zero except for

\begin{align*}
\al_{\ell,2\ell+1} & = \frac{3}{\binom{\ell}{2}\binom{2\ell-1}{\ell-1}}, \quad 
\ell=2,3,\ldots ,\lfloor \textstyle{\frac{n-1}{2}}\rfloor , \\[0.3cm]
\al_{3,n-4} & =1, \\[0.3cm]
\al_{4,n-2} & = \frac{1}{7}+\frac{10}{21}\frac{(n-7)(n-8)}{(n-4)(n-5)}, \\[0.3cm]
\al_{4,n} & =
\begin{cases}
\frac{22105}{15246}, & \text{ if $n=13$, } \\
0  & \text{ if $n \ge 14$,}
\end{cases}
\end{align*}
and
\begin{align*}
\al_{5,n} & = \frac{1}{42}-\frac{70(n-8)}{11(n-2)(n-3)(n-4)(n-5)}+
\frac{25}{99}\frac{(n-6)(n-7)(n-8)}{(n-2)(n-3)(n-4)} \\[0.3cm]
          & \hspace{1.05cm} + \frac{5}{66} \frac{(n-5)(n-6)}{(n-2)(n-3)}-  
\frac{65}{1386} \frac{(n-7)(n-8)}{(n-4)(n-5)}.\\[0.3cm]
\end{align*}
For convenience consider the case $n=13$ separately. For $n\ge 14$ it is easy to see that 
the Lie brackets of $\Lf_n$ are given by
\begin{align*}
[e_1,e_j] & = e_{j+1}, \quad 2\le j\le n-1,\\[0.1cm]
[e_i,e_j] & = c_{i,j}e_{i+j}+d^n_{i,j}e_{i+j+n-11}, \quad 2\le i<j, \, i+j\le 11
\end{align*}
where the coefficients
\[
c_{i,j}=\frac{6(j-i)}{j(j-1)\binom{j+i-2}{i-2}}
\]
are the same as for $W_n$. This follows from the Pfaff-Saalsch\"utz formula, see \cite{BU35}.
It is also easy to see that $d_{2,5}^n=-1$ for all $n\ge 13$.
Define linear maps $t_1,t_2,t_3,h\in \End(\Lf_n)$ exactly like in the case of $W_n$. Note that
$h$ with $h(e_i)=ie_i$ is not a derivation of $\Lf_n$, because 
\[
[e_2,e_5]=c_{2,5}e_7+d_{2,5}^ne_{n-4}=\frac{9}{10}e_7-e_{n-4},
\]
so that 
\begin{align*}
h([e_2,e_5]) & = \frac{63}{10}e_7-(n-4)e_{n-4},\\[0.1cm]
[h(e_2),e_5] + [e_2,h(e_5)] & = 7[e_2,e_5]= \frac{63}{10}e_7-7e_{n-4}
\end{align*}
are different for all $n\ge 13$.

\begin{prop}
Let $n\ge 13$. Then $\{\ad(e_1),\ldots ,\ad(e_{n-1}),t_1,t_2,t_3\}$ is a basis of $\Der(\Lf_n)$. In particular,
all derivations of $\Lf_n$ are nilpotent.
\end{prop}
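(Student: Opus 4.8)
The plan is to bound $\Der(\Lf_n)$ between the explicit list of derivations on one side and the already known structure of $\Der(W_n)$ (Proposition \ref{4.4}) on the other, using that $W_n$ is the associated graded Lie algebra of $\Lf_n$ for the weight filtration. First I would check directly that $t_1,t_2,t_3$ are derivations of $\Lf_n$. Each $t_i$ sends $e_2,\dots,e_{i+1}$ into $\langle e_{n-i+1},\dots,e_n\rangle$ and annihilates the remaining basis vectors, and since $[\langle e_{n-2},e_{n-1},e_n\rangle,\Lf_n]=0$, the Leibniz identity $t_i[e_p,e_q]=[t_ie_p,e_q]+[e_p,t_ie_q]$ collapses to a handful of brackets, exactly as for $W_n$; the correction terms $d^n_{p,q}e_{p+q+n-11}$ never interfere because they, like the images of the $t_i$, lie too high in the lower central series. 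Linear independence of $\{\ad(e_1),\dots,\ad(e_{n-1}),t_1,t_2,t_3\}$ then follows by evaluating a vanishing combination on $e_1$ (killing every $t_i$ and forcing the coefficients of $\ad(e_2),\dots,\ad(e_{n-1})$ to vanish) and afterwards on $e_2$, where $\ad(e_1)e_2=e_3$ and $t_1e_2,t_2e_2,t_3e_2=e_n,e_{n-1},e_{n-2}$ are independent for $n\ge 6$. This yields $\dim\Der(\Lf_n)\ge n+2$.

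Next I would set up the weight filtration: give $e_i$ weight $i$ and put $\Lf_n^{\ge p}=\langle e_p,\dots,e_n\rangle$. Since $[e_i,e_j]\in\langle e_{i+j},\dots,e_n\rangle$ this is a decreasing filtration by ideals whose associated graded algebra is precisely $W_n$, because the leading coefficients $c_{i,j}$ agree with those of $W_n$ while every correction $d^n_{i,j}e_{i+j+n-11}$ has strictly larger weight (as $n-11\ge 2$). Writing the bracket of $\Lf_n$ as $\mu_0+\mu_+$, with $\mu_0$ the weight-preserving ($W_n$) part and $\mu_+$ of weight $n-11$, I decompose any $D\in\Der(\Lf_n)$ into weight-homogeneous pieces $D=\sum_d D^d$ and extract the lowest-weight component of the Leibniz identity. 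As $\mu_+$ and the higher $D^{d'}$ only increase the weight, this lowest component reads $D^{d_0}\mu_0=\mu_0(D^{d_0}\otimes\id+\id\otimes D^{d_0})$, so the leading symbol $D^{d_0}$ is a weight-homogeneous derivation of $W_n$ of degree $d_0$. By Proposition \ref{4.4} every derivation of $W_n$ is a combination of $\ad(e_1),\dots,\ad(e_{n-1}),t_1,t_2,t_3,h$, all of weight-degree $\ge 0$; hence $d_0\ge 0$ and every derivation preserves the filtration. The same extraction shows that, filtering $\Der(\Lf_n)$ by $\Der^{\ge d}(\Lf_n)=\{D: D(\Lf_n^{\ge p})\subseteq\Lf_n^{\ge p+d}\ \forall p\}$, the leading-symbol map induces injections $\Der^{\ge d}(\Lf_n)/\Der^{\ge d+1}(\Lf_n)\hookrightarrow\Der(W_n)^d$ into the homogeneous degree-$d$ derivations of $W_n$, whence $\dim\Der(\Lf_n)\le\dim\Der(W_n)=n+3$.

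The remaining, and hardest, step is to kill the one extra dimension, namely the degree-$0$ direction: $\Der(W_n)^0=\langle h\rangle$, so I must show that no derivation of $\Lf_n$ has leading symbol a nonzero multiple of $h$, i.e. $D^0=0$ for every $D$. This is exactly where the deformation $\mu_+$ must enter. I would feed the relation $[e_2,e_5]=\frac{9}{10}e_7-e_{n-4}$ into the Leibniz identity for $D$ and read off the weight-$(n-4)$ component: the part coming from $D^0=\lambda h$ produces the coefficient $-\lambda(n-4)$ on $e_{n-4}$, whereas the $W_n$-level balance predicts $-7\lambda$ (precisely the discrepancy of the Remark preceding the proposition), and since $n\neq 11$ these disagree, forcing $\lambda=0$. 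The delicate point — and the main obstacle — is the cross term in which $D^{n-11}$ acts through $\mu_0$ on the weight-$7$ part $\frac{9}{10}e_7$, which also lands in weight $n-4$ and could a priori restore the equality; I expect to eliminate it by first normalising $D$ modulo $\Inn(\Lf_n)$ and $\langle t_1,t_2,t_3\rangle$ so that the relevant degree-$(n-11)$ components vanish, turning the obstruction into the bare coefficient mismatch above. Once $D^0=0$ is established, every derivation strictly raises the weight filtration and is therefore a nilpotent endomorphism, proving that all derivations of $\Lf_n$ are nilpotent; moreover the degree-$0$ quotient is now zero, so summing the injections over $d\ge 1$ gives $\dim\Der(\Lf_n)\le\sum_{d\ge 1}\dim\Der(W_n)^d=(n+3)-1=n+2$. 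Together with the lower bound from the first paragraph this forces equality and shows that the displayed set is a basis of $\Der(\Lf_n)$.
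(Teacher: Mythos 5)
Your first two steps are sound, and they are genuinely different from the paper's route: the paper simply reruns the Goze--Khakimdjanov computation of $\Der(W_n)$ with the perturbed structure constants (noting that $h$ drops out) and checks $n=13$ by hand, whereas you get the lower bound $\dim\Der(\Lf_n)\ge n+2$ and the weight-filtration argument giving that every derivation preserves the filtration and $\dim\Der(\Lf_n)\le\dim\Der(W_n)=n+3$. Both of these are correct, modulo one slip: $\langle e_{n-2},e_{n-1},e_n\rangle$ does \emph{not} bracket trivially with $\Lf_n$, since $[e_1,e_{n-2}]=e_{n-1}$ and $[e_1,e_{n-1}]=e_n$; the Leibniz check for $t_1,t_2,t_3$ goes through because these vectors bracket trivially with every $e_q$, $q\ge 2$, while their brackets with $e_1$ match the index shift built into the $t_i$ --- not because they are central.

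The genuine gap is your final step: the normalization you invoke does not exist, and without it the argument fails. The degree-$(n-11)$ component of the Leibniz identity for $D=\lambda h+\sum_{d\ge 1}D^d$ says precisely that $\lambda(n-11)\mu_+=\delta_{\mu_0}(D^{n-11})$ (up to sign), where $\delta_{\mu_0}$ is the Chevalley--Eilenberg differential of $W_n$. Here $D^{n-11}$ ranges over the $11$-dimensional space of degree-$(n-11)$ maps $e_i\mapsto\beta_i e_{i+n-11}$, $1\le i\le 11$, but the derivations you may subtract contribute only a \emph{one}-dimensional space in this degree: $t_1,t_2,t_3$ are homogeneous of degrees $n-2,n-3,n-4\ne n-11$, and $\ad(e_i)$ has components only in degrees $i$ and $i+n-11$, so only $\ad(e_{n-11})$ is usable. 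Hence the cross terms $[D^{n-11}e_2,e_5]_0+[e_2,D^{n-11}e_5]_0-\tfrac{9}{10}D^{n-11}e_7$ cannot be normalized away, and the single weight-$(n-4)$ equation from $(e_2,e_5)$, namely $\lambda(11-n)e_{n-4}=\mbox{(cross terms)}$, cannot force $\lambda=0$. Indeed, no argument using only the \emph{shape} of the perturbation and $d^n_{2,5}\ne 0$ can work: pulling back $\mu_0$ by $\id+E$ with $E\colon e_7\mapsto e_{n-4}$ produces a bracket with the same qualitative features (graded part $W_n$, corrections of degree $\ge n-11$, nonzero $e_{n-4}$-coefficient in $[e_2,e_5]$) which is isomorphic to $W_n$ and therefore \emph{does} admit a derivation with leading term $h$. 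What you actually must prove is that $\delta_{\mu_0}(E)=c\,\mu_+$ has no solution with $c\ne 0$, i.e.\ that the class of $\mu_+$ is nonzero in $H^2(W_n,W_n)$; this uses the specific values of the $d^n_{i,j}$ across all pairs $(e_i,e_j)$ and is exactly the computational core that the paper's appeal to the full $W_n$-style calculation (and the construction in \cite{BU35}) supplies --- the Remark before the proposition only shows $h$ itself is not a derivation, which is strictly weaker. Finally, your description of the brackets, hence of $\mu_+$, is valid only for $n\ge 14$; the case $n=13$ carries an extra correction term and needs the separate verification the paper mentions.
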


\begin{proof}
For $n\ge 14$ the proof goes exactly like the proof for the Witt algebra $W_n$, except that only
$t_1,t_2,t_3$ are derivations, but not $h$. For $n=13$ the claim can be verified explicitly.
\end{proof}

We obtain the remarkable result that all derivations of $\Lf_n$ for $n\ge 13$ are almost inner.
In particular, $t_1,t_2,t_3$ are almost inner, but not inner.

\begin{prop}
We have $\Der(\Lf_n)=\AID(\Lf_n)$ for all $n\ge 13$.
\end{prop}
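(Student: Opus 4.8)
The plan is to combine the structural description of $\Der(\Lf_n)$ from the previous proposition with the fact, already established in the proof of Proposition~\ref{4.5}, that the maps $t_1,t_2,t_3$ are almost inner. Recall that the previous proposition tells us $\{\ad(e_1),\ldots,\ad(e_{n-1}),t_1,t_2,t_3\}$ is a basis of $\Der(\Lf_n)$ for all $n\ge 13$. Since the inner derivations $\ad(e_1),\ldots,\ad(e_{n-1})$ are trivially almost inner, it suffices to show that $t_1,t_2,t_3$ are almost inner, because $\AID(\Lf_n)$ is a linear subspace of $\Der(\Lf_n)$ and hence contains the span of any collection of almost inner derivations. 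Once all basis elements are shown to lie in $\AID(\Lf_n)$, we conclude $\Der(\Lf_n)=\AID(\Lf_n)$.

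First I would verify that the explicit maps $\phi_{t_1},\phi_{t_2},\phi_{t_3}\in\End(\Lf_n)$ constructed in the proof of Proposition~\ref{4.5} still witness the almost inner property in $\Lf_n$, i.e.\ that $t_j(x)=[x,\phi_{t_j}(x)]$ for all $x\in\Lf_n$ and $j=1,2,3$. The key point is that the top-degree coefficients $c_{i,j}$ of $\Lf_n$ agree with those of the Witt algebra $W_n$, so the same rational formulas in the $c_{i,j}$ are available and well-defined for $n\ge 9$ (and in particular for $n\ge 13$). The only feature distinguishing $\Lf_n$ from $W_n$ is the presence of the extra lower-order terms $d^n_{i,j}e_{i+j+n-11}$ in the brackets $[e_i,e_j]$ with $i+j\le 11$. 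Since each $\phi_{t_j}(x)$ is a linear combination of basis vectors $e_m$ with $m$ close to $n$, the brackets $[e_k,e_m]$ entering the computation have $k+m$ large, so the correction index $k+m+n-11$ exceeds $n$ and the corresponding $e_{k+m+n-11}$ vanishes. Hence the extra terms contribute nothing and the verification reduces verbatim to the Witt-algebra calculation already carried out.

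The main obstacle is precisely to confirm that the perturbation terms $d^n_{i,j}$ never interfere: one must check, for each of the three maps and each case of its piecewise definition, that every bracket $[e_k,e_m]$ appearing satisfies $k+m+n-11>n$, equivalently $k+m>11$, so that the $d$-correction lands outside the algebra. For $t_1,t_2$ this is immediate since $\phi_{t_1},\phi_{t_2}$ only involve basis vectors of index at least $n-3$, and $n\ge 13$ forces $k+m\ge (n-3)+2>11$. For $t_3$ one must also inspect the lowest-index contributions in $\rho_2,\rho_3,\rho_4$, where indices as small as $n-4$ appear; the bound $n\ge 13$ again gives $k+m\ge(n-4)+2>11$, so the argument goes through uniformly. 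The case $n=13$, where the bracket structure includes the special value $\al_{4,n}$ and need not be put in the simplified form valid for $n\ge 14$, can be checked by direct computation as in the previous proposition.

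Finally, I would dispose of the statement in one line: by the above, $t_1,t_2,t_3\in\AID(\Lf_n)$, and since the $\ad(e_i)$ are inner, all basis elements of $\Der(\Lf_n)$ lie in $\AID(\Lf_n)$; as $\AID(\Lf_n)\subseteq\Der(\Lf_n)$ always holds, we obtain $\Der(\Lf_n)=\AID(\Lf_n)$ for all $n\ge 13$.
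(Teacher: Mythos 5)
Your proposal is correct in structure and follows essentially the same route as the paper's proof: reduce the statement, via the basis $\{\ad(e_1),\ldots,\ad(e_{n-1}),t_1,t_2,t_3\}$ of $\Der(\Lf_n)$, to showing $t_1,t_2,t_3\in\AID(\Lf_n)$; reuse the maps $\phi_{t_1},\phi_{t_2},\phi_{t_3}$ from Proposition~\ref{4.5} for $n\ge 14$; and verify $n=13$ by direct computation. You are in fact more explicit than the paper about why the Witt-algebra computation carries over for $n\ge 14$, namely that every bracket $[e_k,e_m]$ with $k,m\ge 2$ occurring in the verification has $k+m>11$, so the corrections $d^n_{k,m}e_{k+m+n-11}$ fall outside the algebra.

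One slip should be corrected. For $t_3$ you assert that $n\ge 13$ gives $k+m\ge (n-4)+2>11$, ``so the argument goes through uniformly.'' At $n=13$ this is an equality, $(n-4)+2=11$, not a strict inequality: the bracket $[e_2,e_{n-4}]=[e_2,e_9]$, which occurs in the $\rho_2$-case of the verification, satisfies $2+9=11$ and therefore carries a correction term $d^{13}_{2,9}\,e_{13}$ \emph{inside} $\Lf_{13}$; this coefficient is nonzero (for instance $\al_{3,n-4}=1$ contributes $-\binom{5}{1}\al_{3,9}=-5$ to it). So the Witt-algebra verification of $t_3$ does \emph{not} carry over to $n=13$; this, in addition to the extra terms coming from $\al_{4,13}\neq 0$, is why the case $n=13$ genuinely requires a separate direct check with suitably modified maps, rather than being covered twice over. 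Since you do defer $n=13$ to such a direct computation --- exactly as the paper does --- your proof stands once the word ``uniformly'' is deleted and the degree-bound argument is restricted to $n\ge 14$.
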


\begin{proof}
Indeed, the derivations $t_1,t_2,t_3$ are almost inner. For $n\ge 14$ this follows in the same way as in the proof of
Proposition $\ref{4.5}$, using the functions $\phi_{t_1},\phi_{t_2},\phi_{t_3}$. These depend only on
the structure constants $c_{i,j}$, and all computations are also valid here. For $n=13$ the result can be verified
directly.
\end{proof}

\section{Lie algebras whose solvable radical is abelian}

The aim of this section is to show that $\AID(\Lg)=\Inn(\Lg)$ for any Lie algebra $\Lg$ whose solvable radical 
is abelian, over an algebraically closed field $K$ of characteristic zero. 
Fix such a Lie algebra $\Lg$ and let $\La$ denote the solvable radical 
of $\Lg$, which is abelian. Then we have $\Lg=\La\rtimes \Ls$, where $\Ls$ is a semisimple Lie algebra. 
Let $s\in \Ls$ and $a\in \La$. Denote by $s\cdot a=[s,a]$ the  $\Ls$-module  structure of $\La$. 
Then the Lie bracket in $\Lg$ is given by 
\[ [(a_1,s_1),(a_2,s_2)]= (s_1\cdot a_2 - s_2 \cdot a_1, [s_1,s_2]), \; \forall a_1,a_2\in \La,\; \forall s_1,s_2\in \Ls.\]
In the sequel we will use
\[ \End_\Ls(\La)=\{ \varphi: \La \to \La\;|\; \varphi \mbox{ is linear  and } \varphi(s\cdot a)=s\cdot \varphi(a),
\;\forall s\in \Ls,\;\forall a\in \La\}
\]
to denote the space of $\Ls$-endomorphisms of $\La$. For any $\varphi\in \End_\Ls(\La)$ we define 
\[
D_\varphi:\La\rtimes \Ls \to \La\rtimes \Ls: (a,s) \mapsto (\varphi(a), 0).
\]
\begin{lem}
We have $D_\varphi \in \Der(\La \rtimes \Ls)$. 
\end{lem}
\begin{proof} Let $a_1,a_2\in \La$ and $s_1,s_2\in \Ls$.
On the one hand, we have that 
\begin{equation}\label{LHSD}
D_\varphi([(a_1,s_1),(a_2,s_2)])=D_\varphi(s_1\cdot a_2 - s_2 \cdot a_1, [s_1,s_2])= (\varphi(s_1\cdot a_2 - s_2 \cdot a_1),0),
\end{equation}
while on the other hand
\begin{equation}\label{RHSD}
[D_\varphi(a_1,s_1), (a_2,s_2) ] + [ (a_1,s_1) , D_\varphi(a_2,s_2)] = (-s_2\cdot \varphi(a_1) + s_1 \cdot \varphi(a_2),0).
\end{equation}
Since $\varphi\in \End_\Ls(\La)$, it holds that \eqref{LHSD} coincides with \eqref{RHSD}, which shows that 
$D_\varphi\in \Der(\La\rtimes \Ls)$.
\end{proof}

\begin{prop} Let $\LD= \{ D_\varphi\;|\; \varphi \in \End_\Ls (\La) \}$. Then as 
vector spaces we have that 
\[ \Der(\La\rtimes  \Ls) = \Inn(\La\rtimes \Ls) \oplus \LD.\]
\end{prop}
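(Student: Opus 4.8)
The plan is to establish the direct sum decomposition $\Der(\La\rtimes\Ls)=\Inn(\La\rtimes\Ls)\oplus\LD$ by two separate arguments: first that the sum is direct (the two summands meet only in $0$), and second that together they exhaust all derivations. The preceding lemma already guarantees $\LD\subseteq\Der(\La\rtimes\Ls)$, and $\Inn$ is of course always inside $\Der$, so both summands live in the correct space; the content is disjointness plus spanning.

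\emph{Directness.} Suppose $D_\varphi=\ad(a_0,s_0)$ for some $\varphi\in\End_\Ls(\La)$ and some $(a_0,s_0)\in\La\rtimes\Ls$. Evaluating the inner derivation on a pair $(a,s)$ gives $\ad(a_0,s_0)(a,s)=(s_0\cdot a-s\cdot a_0,[s_0,s])$, while $D_\varphi(a,s)=(\varphi(a),0)$. Comparing the $\Ls$-components forces $[s_0,s]=0$ for all $s\in\Ls$, so $s_0$ is central in the semisimple $\Ls$, hence $s_0=0$. Comparing the $\La$-components then yields $\varphi(a)=-s\cdot a_0$ for every $s$ and every $a$; since the right-hand side does not depend on $a$ while the left-hand side is linear in $a$, taking $a=0$ gives $s\cdot a_0=0$ for all $s$, whence $\varphi=0$ and also $a_0$ lies in the fixed part of the module. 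This shows $\Inn\cap\LD=0$.

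\emph{Spanning.} Let $D\in\Der(\La\rtimes\Ls)$ be arbitrary; I must write $D=\ad(v)+D_\varphi$. The strategy is to use the semisimplicity of $\Ls$ to normalize $D$ on the subalgebra $\Ls$ first. Because $\Ls$ is semisimple, every derivation of $\Ls$ is inner, and more usefully $H^1(\Ls,M)=0$ for every finite-dimensional $\Ls$-module $M$ (Whitehead's lemma). Restricting $D$ to $\Ls$ and projecting, the map $s\mapsto D(0,s)$ is a cocycle-type object; subtracting a suitable inner derivation $\ad(v)$ I can arrange that $D$ kills $\Ls$, i.e.\ $D(0,s)=0$ for all $s\in\Ls$, and that $D(\La)\subseteq\La$ (the $\La$-part is an ideal, so $D$ preserves it up to the normalization). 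Once $D$ annihilates $\Ls$, define $\varphi:=D|_\La$ viewed as a map $\La\to\La$; the derivation identity applied to $[(0,s),(a,0)]=(s\cdot a,0)$ together with $D(0,s)=0$ gives exactly $\varphi(s\cdot a)=s\cdot\varphi(a)$, so $\varphi\in\End_\Ls(\La)$ and the remaining part of $D$ equals $D_\varphi$.

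\emph{Main obstacle.} The delicate step is the spanning direction, specifically the simultaneous normalization: using Whitehead's lemma to trivialize $D$ on $\Ls$ while controlling how $D$ acts across the semidirect product (the mixed terms $D(a,0)$ and their $\Ls$-components). One must check that the cocycle whose vanishing is needed really is a $1$-cocycle for an appropriate $\Ls$-module structure, and that the coboundary used to kill it is realized by an \emph{inner} derivation of the whole algebra $\La\rtimes\Ls$, not merely of $\Ls$. I expect this bookkeeping — identifying the correct module and confirming that the Whitehead correction is implemented by some $\ad(v)$ with $v\in\La\rtimes\Ls$ — to be where the real work lies; the $\End_\Ls$ identification at the end is then a short direct computation.
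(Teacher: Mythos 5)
Your proposal takes essentially the same route as the paper's proof: directness by comparing components of $D_\varphi$ and $\ad(a_0,s_0)$, and spanning by a two-step normalization using semisimplicity of $\Ls$ (inner-ness of its derivations, then Whitehead's lemma $H^1(\Ls,\La)=0$), followed by the identification $\varphi\in\End_\Ls(\La)$. Your directness argument is complete and correct, and indeed more detailed than the paper, which dismisses that half as easy. The ``bookkeeping'' you defer as the main obstacle is exactly what the paper carries out, and it goes through just as you anticipate: after the first correction one has $D(0,s)=(f(s),0)$ with $f:\Ls\to\La$ linear; the derivation identity applied to $[(0,s_1),(0,s_2)]$ gives $f([s_1,s_2])=s_1\cdot f(s_2)-s_2\cdot f(s_1)$, so $f\in Z^1(\Ls,\La)$; by Whitehead $f(s)=s\cdot a_0$ for some $a_0\in\La$, and this coboundary is realized by an inner derivation of the whole algebra, namely $\ad((a_0,0))$, since $[(a_0,0),(0,s)]=(-s\cdot a_0,0)$. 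So the module is just $\La$ with its given $\Ls$-action, and the correction lives in $\Inn(\La\rtimes\Ls)$ as required.

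One point in your sketch is genuinely wrong as stated: you justify $D(\La)\subseteq\La$ by saying ``the $\La$-part is an ideal, so $D$ preserves it.'' Ideals are in general \emph{not} invariant under derivations (only under inner ones), so this reasoning fails. The fact itself is true, but for a different reason: $\La$ is the solvable radical of $\La\rtimes\Ls$ (it is a solvable ideal with semisimple quotient), and in characteristic zero the radical is a characteristic ideal, i.e.\ invariant under every derivation. This invariance is needed to define $\varphi:=D|_\La$ as a map $\La\to\La$ in the final step; the paper uses it silently as well, but you should replace your justification by the radical argument (or, alternatively, write $D(a,0)=(\varphi(a),\mu(a))$ and rule out the $\Ls$-component $\mu$ by hand).
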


%{\bf Remark: In Bert his text on perfect Lie algebras $\La=V$ was an irreducible module and in this case 
%$\LD=\langle E \rangle$ is his notation. So this prop. is exactly the generalization of prop 1.1 of Bert's paper}

\begin{proof}
It is easy to see that $\Inn(\La\rtimes \Ls) \cap \LD = 0$, so we have to show that 
$\Der(\La\rtimes  \Ls) = \Inn(\La\rtimes \Ls) +\LD$. Now, consider any $D\in \Der(\La\rtimes \Ls)$. 
The derivation $D$ induces a derivation on $\Ls$, which is an inner 
derivation, since $\Ls$ is semisimple. So, after changing $D$ up to an inner derivation, we may assume that 
$D$ induces the zero map on $\Ls$. It follows that there exists a linear map $f:\Ls \to \La$ such that 
$D(0,s) = (f(s),0)$ for all $s\in \Ls$. As $D$ is a derivation we have that 
\[ D(0,[s_1,s_2]) = D[ (0,s_1), (0,s_2) ] = [D(0,s_1), (0,s_2)]+[(0,s_1),D(0,s_2)].\]
And so
\[ (f([s_1,s_2]),0)= [(f(s_1) ,0),(0,s_2)]+ [(0,s_1),(f(s_2) ,0)], 
\]
which gives 
\[ f([s_1,s_2])= s_1\cdot f(s_2) - s_2 \cdot f(s_1). \]
This means that $f\in Z^1(\Ls,\La)$ is a 1--cocycle. As $\Ls$ is semisimple, we have that $H^1(\Ls,\La)=0$ and so there 
exists an element $a_0\in \La$ such that $f(s)= s\cdot a_0$ for all $s\in \Ls$. Now
\[ \forall s\in \Ls:\;(D+\ad ((a_0,0))(0,s) = (f(s),0)+[(a_0,0),(0,s)]= (0,0).\]
This means that after changing $D$ again with an inner derivation we will assume that $D(\Ls)=0$ and hence there is a 
linear map $ \varphi:\La \to \La$ such that 
\[ \forall a\in \La,\;\forall s \in \Ls:\; D(a,s)= (\varphi(a),0).\]
Now, using the fact that $D$ is a derivation, we must have that 
\[ D[(a,0),(0,s)] = [D(a,0), (0,s)] +  [(a,0),D(0,s)]\]
which implies that 
\[ D(-s\cdot a, 0) = [(\varphi(a),0), (0,s) ]\Rightarrow \varphi(s\cdot a) = s\cdot \varphi(a).\]
This shows that after changing $D$ up to an inner derivation we have that $D=D_\varphi\in \LD$ which finishes the proof.
\end{proof}
We are now ready to prove the main result of this section.

\begin{thm}
Let $\Lg$ be a Lie algebra over an algebraically closed field $K$ of characteristic zero whose solvable radical is abelian. 
Then $\AID(\Lg)=\Inn(\Lg)$.
\end{thm}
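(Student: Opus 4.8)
The plan is to combine the decomposition $\Der(\La\rtimes\Ls)=\Inn(\La\rtimes\Ls)\oplus\LD$ from the preceding Proposition with a weight argument applied to a carefully chosen nilpotent element of $\Ls$. Write $\Lg=\La\rtimes\Ls$ as in this section. Given $D\in\AID(\Lg)$, the Proposition lets me write $D=\ad(v)+D_\varphi$ for some $v\in\Lg$ and some $\varphi\in\End_{\Ls}(\La)$. Since $\Inn(\Lg)\subseteq\AID(\Lg)$ and $\AID(\Lg)$ is a linear subspace, $D_\varphi=D-\ad(v)$ is again almost inner. Because $\Inn(\Lg)\cap\LD=0$, it suffices to show that $\varphi=0$: then $D=\ad(v)\in\Inn(\Lg)$, giving $\AID(\Lg)\subseteq\Inn(\Lg)$, while the reverse inclusion is automatic.

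First I would unwind the almost inner condition for $D_\varphi$. From the bracket formula $[(b,t),(a,s)]=(t\cdot a-s\cdot b,[t,s])$ and $D_\varphi(a,s)=(\varphi(a),0)$, one sees that $D_\varphi(a,s)\in[\Lg,(a,s)]$ holds if and only if there exist $b\in\La$ and $t\in\Ls$ with $[t,s]=0$ and $\varphi(a)=t\cdot a-s\cdot b$. Thus $D_\varphi$ is almost inner precisely when $\varphi(a)\in C_{\Ls}(s)\cdot a+s\cdot\La$ for all $a\in\La$ and all $s\in\Ls$. The choice $s=0$ only yields $\varphi(a)\in\Ls\cdot a$, which does not force $\varphi=0$; the real leverage comes from a well-chosen nilpotent $s$.

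The key step is to take $s=e$ a principal (regular) nilpotent element of $\Ls$, with $\mathfrak{sl}_2$-triple $(e,h,f)$. Here the hypotheses on $K$ enter: over an algebraically closed field of characteristic zero such an $e$ exists, and it is distinguished, so that $C_{\Ls}(e)$ consists of nilpotent elements and, concretely, lies in strictly positive $\ad(h)$-weights. Grading $\La$ by the $\ad(h)$-weight spaces $\La_m=\{a\in\La\mid h\cdot a=m\,a\}$, the relation $\varphi(h\cdot a)=h\cdot\varphi(a)$ shows that $\varphi$ preserves each $\La_m$. Let $\mu$ be the lowest occurring weight and take $a\in\La_\mu$. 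Then $t\cdot a$ has $\ad(h)$-weight $>\mu$ for every $t\in C_{\Ls}(e)$, while $e\cdot\La$ has trivial weight-$\mu$ component since $e$ raises weights by $2$ and $\La_{\mu-2}=0$. Projecting the identity $\varphi(a)=t\cdot a-e\cdot b$ onto $\La_\mu$ therefore yields $\varphi(a)=0$, so $\varphi$ vanishes on the lowest weight space.

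To upgrade this to $\varphi=0$ on all of $\La$, I would first reduce to a single isotypic component. Decomposing $\La=\bigoplus_j\La_j$ into $\Ls$-isotypic components, both $e$ and $\varphi$ preserve each $\La_j$, and projecting the membership $\varphi(a)\in C_{\Ls}(e)\cdot a+e\cdot\La$ onto $\La_j$ shows the condition holds inside each $\La_j$ separately. Hence I may assume $\La=U\otimes M$ with $U$ irreducible, and by Schur's lemma $\varphi=\id_U\otimes\varphi_M$ for some $\varphi_M\in\End(M)$. The lowest weight space is now $U_\mu\otimes M$ with $U_\mu\neq 0$, and the previous step gives $\id_{U_\mu}\otimes\varphi_M=0$ there, which forces $\varphi_M=0$ and hence $\varphi=0$. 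The main obstacle is precisely the middle step: recognizing that a distinguished nilpotent element is the correct $s$ to substitute, so that its centralizer contributes nothing in the lowest weight and the almost inner condition collapses — and then using the isotypic reduction to promote vanishing on the lowest weight space to vanishing everywhere.
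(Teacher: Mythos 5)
Your proposal is correct, and its first half coincides with the paper's proof: both arguments invoke the decomposition $\Der(\La\rtimes\Ls)=\Inn(\La\rtimes\Ls)\oplus\LD$, reduce to showing that a nonzero $D_\varphi$ with $\varphi\in\End_\Ls(\La)$ cannot be almost inner, and attack this by substituting a nilpotent element of $\Ls$ with a ``small'' centralizer into the almost inner condition. The mechanism for the final contradiction is genuinely different, however. The paper takes an arbitrary distinguished nilpotent $s_0$ (citing Tauvel--Yu), notes that $C_\Ls(s_0)$ acts on $V=\varphi(\La)$ by nilpotent endomorphisms, and concludes by an Engel-type argument that $C_\Ls(s_0)\cdot V\subsetneq V$; picking $v_0\in V\setminus C_\Ls(s_0)\cdot V$, a preimage $a_0\in\varphi^{-1}(v_0)$, and a complementary $\Ls$-submodule of $V$ in $\La$ (so that $v_0\notin C_\Ls(s_0)\cdot\La$), it exhibits the single element $(a_0,s_0)$ at which almost-innerness fails: any identity $(\varphi(a_0),0)=[(a_0,s_0),(a,s)]$ forces $s\in C_\Ls(s_0)$ and hence $v_0=s_0\cdot a-s\cdot a_0\in C_\Ls(s_0)\cdot\La$, a contradiction. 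You instead specialize to the principal nilpotent $e$, use the finer fact that $C_\Ls(e)$ sits in strictly positive $\ad(h)$-weights, kill $\varphi$ on the lowest weight space by projection, and then bootstrap to $\varphi=0$ through the isotypic decomposition and Schur's lemma. Both routes use algebraic closure and characteristic zero essentially. The paper's version is more economical: it needs only Engel's theorem, complete reducibility, and the fact that representations of semisimple algebras carry nilpotent elements to nilpotent endomorphisms, it works with any distinguished element, and it avoids weight theory and Schur's lemma altogether. Yours requires heavier structure theory (existence of the principal $\mathfrak{sl}_2$-triple and Kostant's description of its centralizer) plus the isotypic reduction, which — as you rightly recognize — is genuinely necessary, since the lowest-weight argument alone only gives vanishing on one graded piece; in exchange, your argument shows $\varphi$ vanishes weight space by weight space rather than locating a single witness of failure.
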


\begin{proof}
As before, we can write $\Lg=\La\rtimes \Ls$, with $\La$ abelian and $\Ls$ semisimple. We have shown that 
$\Der(\Lg)=\Inn(\Lg) \oplus \LD$.
In order to prove the result, we have to show that  if a  derivation $D\in \LD$  is not the zero map, then $D$ is not 
an almost inner derivation. 
So consider a nonzero $D\in \LD$, then $D=D_\varphi$ for some nonzero $\varphi\in \End_\Ls(\La)$. 
Let $V=\varphi(\La)$ be the image of $\varphi$. Then $V$ is a nonzero $\Ls$--submodule of $\La$. 
The Lie algebra $\Ls$ contains a nilpotent element $s_0$ such that $C_\Ls(s_0)$ consists entirely of nilpotent elements,
see for example \cite[section $35$]{TAY}. In particular $C_\Ls(s_0)$ is also nilpotent as a Lie algebra. 
Consider the map $\psi:\Ls \to \End(V):s \mapsto \psi(s)$, 
where $\psi(s)(v)=s\cdot v$. Then $\psi$ is a representation of Lie algebras and since $\Ls$ is semisimple $\psi$ maps 
nilpotent elements to nilpotent elements. 
It follows that $\psi(C_\Ls(s_0))$ consists of nilpotent endomorphisms and in particular, it follows that 
$\psi(C_\Ls(s_0))(V)= C_\Ls(s_0) \cdot V$ is strictly contained in $V$. Let $v_0\in V\backslash (C_\Ls(s_0)\cdot V)$ and pick an  
$a_0\in \La$ with $\varphi(a_0)=v_0$. Note that since $\Ls$ is semisimple, we can find a complementary 
$\Ls$--submodule $W$ of $V$ in $\La$ such that  $\La$ decomposes as a direct sum $\La=V\oplus W$ of $\Ls$--modules.
In particular, we also find that $v_0\in \La \setminus C_\Ls(s_0)\cdot \La$. \\
We claim that $D_\varphi(a_0,s_0) \not\in [(a_0,s_0), \Lg]$, which shows that $D_\varphi$ is not an almost 
inner derivation. Indeed, assume that 
\[ D_\varphi(a_0, s_0)= [(a_0,s_0), (a,s)]\mbox{ for some }a\in \La\mbox{ and some }s\in \Ls.\]
Then we have that 
\[
(\varphi(a_0),0 )  = [(a_0,s_0), (a,s)] = (s_0\cdot a - s \cdot a_0, [s_0,s]).  
\]
This shows that $[s_0,s]=0 $ and so $s\in C_\Ls(s_0)$. However, this now implies that 
$\varphi(a_0)=v_0= s_0\cdot a - s \cdot a_0\in C_\Ls(s_0)\cdot \La$ which is a contradiction with the fact that we 
have chosen $v_0$ such that $v_0\in \La \setminus C_\Ls(s_0)\cdot \La$.
\end{proof}

\section{Change of base field}

Consider a field extension $K$ of $k$. Let $\Lg_k$ be a Lie algebra over $k$ and denote by $\Lg_K = K \otimes_k \Lg_k$ 
the corresponding Lie algebra over $K$. This is an extension of scalars. We will assume that $k$ has characteristic zero and 
that the field extension is of finite degree $[K:k] = n$. The primitive element theorem ensures that $K = k(s)$ for some $s \in K$. 
Then $\CB = \{1,s,s^2, \ldots,s^{n-1}\}$ is a vector space basis of $K$ over $k$. It follows that
\begin{equation}
\label{eq: vector spaces}
	\Lg_K = \Lg_k \oplus s \Lg_k \oplus \ldots \oplus s^{n - 1} \Lg_k
\end{equation} holds as vector spaces over $k$. The typical example is $K = \C$ and $k = \R$, where $\{1,s\} = \{1,i\}$ and 
$\Lg_\C = \Lg_\R \oplus i\Lg_\R$.
We can also consider $\Lg_K$ as a Lie algebra over $k$. We will denote this Lie algebra with $\Lg_k'$. Note that, as sets, we have $\Lg_k' = \Lg_K$. Finally, $\Lg_K' := K \otimes_k \Lg_k'$ is again a Lie algebra over $K$. \\
Let us for the moment consider the  special situation when $[K:k] = 2$. In that case we have that $K = k(\alpha)$ for some $\alpha \in K \setminus k$ 
with $d := \alpha^2 \in k$. Hence, we can write $K = k(\sqrt{d})$. Now let $\Lg_k$ be a Lie algebra over $k$ of dimension $r$ 
with basis $\{e_1,e_2,\ldots,e_r\}$ and structure constants $c_{ij}^p$, so $[e_i,e_j] = \sum_{p = 1}^r c_{ij}^p e_p$. Then 
$\Lg_K = K \otimes_k \Lg_k$ has the same structure constants and the same basis. Further, $\Lg_k'$ has basis 
$\{e_1,e_2,\ldots,e_r,\alpha e_1,\alpha e_2,\ldots, \alpha e_r\}$. Denote $f_i := \alpha e_i$ for all $1 \leq i \leq r$, then 
the structure constants are
\begin{equation*}
[e_i,e_j] = \sum_{p = 1}^r c_{ij}^p e_p, \qquad [e_i,f_j] = \sum_{p = 1}^r c_{ij}^p f_p, \qquad  
[f_i,f_j] = d \sum_{p = 1}^r c_{ij}^p e_p = d[e_i,e_j].
\end{equation*} The Lie algebra $\Lg_K'$ has the same basis and structure constants as $\Lg_k'$.

\begin{lem}
\label{lem: extension of dimension 2}
For $[K:k] = 2$ we have $\Lg_K' \cong \Lg_K \oplus \Lg_K$.
\end{lem}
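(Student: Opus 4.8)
The plan is to exhibit an explicit isomorphism by locating, inside $\Lg_K'$, two commuting $K$-subalgebras each isomorphic to $\Lg_K$ whose direct sum is all of $\Lg_K'$. The conceptual reason this should succeed is that $\Lg_K' = K\otimes_k \Lg_k' = K\otimes_k(K\otimes_k \Lg_k)\cong (K\otimes_k K)\otimes_k \Lg_k$, and since $K/k$ is a separable degree-$2$ extension (we are in characteristic zero), $K\otimes_k K\cong K\times K$ as $K$-algebras; tensoring with $\Lg_k$ then splits $\Lg_K'$ as $\Lg_K\oplus \Lg_K$. I would, however, carry this out by hand using the structure constants already recorded above, which makes the splitting idempotents visible.

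First I would note that $d=\alpha^2\neq 0$ (a nonzero field element cannot square to $0$), so $\alpha$ is invertible in $K$ and $\alpha^{-2}d=1$; this single identity drives the whole computation. The crucial conceptual point is that, after extending scalars to $K$, the symbol $\alpha$ becomes an honest scalar available for changes of basis, even though in $\Lg_k'$ it had been absorbed into the basis vectors $f_i=\alpha e_i$. Then I would define, for each $i$,
\[
u_i=\tfrac12\bigl(e_i+\alpha^{-1}f_i\bigr),\qquad w_i=\tfrac12\bigl(e_i-\alpha^{-1}f_i\bigr),
\]
so that $e_i=u_i+w_i$ and $f_i=\alpha(u_i-w_i)$; hence $\{u_1,\dots,u_r,w_1,\dots,w_r\}$ is again a $K$-basis of $\Lg_K'$.

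Using the three bracket rules $[e_i,e_j]=\sum_p c_{ij}^p e_p$, $[e_i,f_j]=[f_i,e_j]=\sum_p c_{ij}^p f_p$ and $[f_i,f_j]=d\sum_p c_{ij}^p e_p$, a direct computation gives
\[
[u_i,u_j]=\sum_p c_{ij}^p u_p,\qquad [w_i,w_j]=\sum_p c_{ij}^p w_p,\qquad [u_i,w_j]=0,
\]
where the vanishing of the cross bracket and the closure of each family into its own span are precisely the steps that invoke $\alpha^{-2}d=1$. Finally I would conclude that the subspaces $\Li_+=\langle u_1,\dots,u_r\rangle$ and $\Li_-=\langle w_1,\dots,w_r\rangle$ are ideals of $\Lg_K'$, each carrying the structure constants $c_{ij}^p$ of $\Lg_k$ over $K$, hence each $K$-isomorphic to $\Lg_K$ via $u_i\mapsto e_i$ (respectively $w_i\mapsto e_i$); since they commute and together span $\Lg_K'$, we obtain $\Lg_K'=\Li_+\oplus\Li_-\cong \Lg_K\oplus \Lg_K$. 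The only genuine obstacle is guessing the correct splitting vectors $u_i,w_i$ — equivalently, the orthogonal idempotents in $K\otimes_k K$; once the factor $\alpha^{-1}$ is inserted, verifying the bracket relations is entirely routine.
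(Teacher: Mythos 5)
Your proof is correct and is essentially the paper's own argument run in the opposite direction: your elements $u_i,w_i$ are exactly the preimages of the basis vectors $a_i,b_i$ under the paper's explicit isomorphism $e_i\mapsto a_i+b_i$, $f_i\mapsto \alpha a_i-\alpha b_i$, and the three bracket verifications are the same computations using $\alpha^2=d$. The only difference is presentational (an internal decomposition into two commuting ideals, plus the nice conceptual gloss via $K\otimes_k K\cong K\times K$, rather than a map onto an external direct sum), so no further comparison is needed.
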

\begin{proof}
The Lie algebra $\Lg_K'$ has basis $\{e_1,e_2,\ldots,e_r,f_1,f_2,\ldots,f_r\}$ and structure constants as above. We take for 
$\Lg_K \oplus \Lg_K$ a basis $\{a_1,a_2,\ldots,a_r,b_1,b_2,\ldots,b_r\}$ with
\begin{equation*}
	[a_i,a_j] = \sum_{p = 1}^r c_{ij}^p a_p, \qquad [b_i,b_j] = \sum_{p = 1}^r c_{ij}^p b_p, \qquad [a_i,b_j] = 0.
\end{equation*} Let $\varphi: \Lg_K' \to \Lg_K \oplus \Lg_K$ be the linear map with $\varphi(e_i) = a_i + b_i$ and 
$\varphi(f_i) = \alpha a_i - \alpha b_i$ for all $1 \leq i \leq r$. Then $\varphi$ is an isomorphism of vector spaces. 
Moreover, $\varphi$ is a Lie algebra morphism. Indeed, take $1 \leq i,j \leq r$ arbitrarily, then we have that
\begin{equation*}
[\varphi(e_i),\varphi(e_j)] = [a_i + b_i,a_j + b_j] = [a_i,a_j] + [b_i,b_j] = \sum_{p = 1}^r c_{ij}^p (a_p + b_p) = \varphi([e_i,e_j]).
\end{equation*} Furthermore, also 
\begin{equation*}
	[\varphi(e_i),\varphi(f_j)] = [a_i + b_i, \alpha a_j - \alpha b_j] = \alpha \sum_{p = 1}^r c_{ij}^p (a_p - b_p) 
= \varphi\left(\sum_{p = 1}^r c_{ij}^p f_p \right) = \varphi([e_i,f_j])
\end{equation*} is satisfied. Finally, we find that 
\begin{align*}
	[\varphi(f_i),\varphi(f_j)] & = [\alpha(a_i - b_i),\alpha(a_j - b_j)] = \alpha^2 [a_i,a_j] + \alpha^2 [b_i,b_j] \\
	& = d \left( [a_i,a_j] + [b_i,b_j] \right) = \varphi(d[e_i,e_j]) \\
	& = \varphi([f_i,f_j]).
\end{align*}
\end{proof}
\begin{rem}
Let $k \subseteq K$ be a Galois extension and $\Lg_K$ a Lie algebra over $K$ with underlying Lie algebra $\Lg_k'$. A more general result about the structure of $\Lg_K'$ can be found in \cite{DER}.
\end{rem}
Now, we return again to the general situation where $[K:k]=n \geq 2$.
Suppose that $D \in \Der(\Lg_k)$, then we can consider $D_K = 1_K \otimes_k D$ where $D_K: \Lg_K \to \Lg_K$ is the 
$K$-linear map such that ${D_K}_{\mid \Lg_k} = D$. This means that $D_K \in \Der(\Lg_K)$. Conversely, if $D \in \Der(\Lg_K)$ and 
$D(\Lg_k) \subseteq \Lg_k$, then $D_{\mid \Lg_k} \in \Der(\Lg_k)$. 

\begin{rem}
These two ``procedures" are inverses of each other. Indeed, for $D \in \Der(\Lg_k)$, we have that ${D_K}_{\mid \Lg_k} = D$. 
Moreover, for $D \in \Der(\Lg_K)$ with $D(\Lg_k)\subseteq \Lg_k$ we have $\left(D_{\mid \Lg_k}\right)_K = D$.
\end{rem}

\begin{lem}
We have $\Der(\Lg_K) = K \otimes_k \Der(\Lg_k)$.
\end{lem}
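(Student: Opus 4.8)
The plan is to show that the natural $K$-linear map
\[
\Phi : K \otimes_k \Der(\Lg_k) \to \Der(\Lg_K), \qquad \lambda \otimes D \mapsto \lambda\cdot D_K,
\]
is an isomorphism, where $D_K = 1_K \otimes_k D$ is the $K$-linear extension from the remark above and $\lambda\cdot$ is scalar multiplication in the $K$-vector space $\Der(\Lg_K)$. First I would note that $\Der(\Lg_K)$ really is a $K$-vector space: for $\lambda \in K$ and $D \in \Der(\Lg_K)$ the map $\lambda D$ satisfies the Leibniz rule precisely because the bracket of $\Lg_K$ is $K$-bilinear. Since $\lambda \otimes D \mapsto \lambda D_K$ is $k$-bilinear in $(\lambda, D)$ and $K$-linear in the left factor, $\Phi$ is a well-defined $K$-linear map, and it remains to prove injectivity and surjectivity.

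For injectivity I would fix a $k$-basis $D_1,\ldots,D_d$ of $\Der(\Lg_k)$ and take a general element $\sum_{i=1}^d \lambda_i \otimes D_i$ of the domain. If its image $\sum_i \lambda_i (D_i)_K$ is the zero map, restrict to $\Lg_k$, where $(D_i)_K|_{\Lg_k} = D_i$ takes values in $\Lg_k$. Writing each $\lambda_i = \sum_{j=0}^{n-1}\lambda_{ij}s^j$ with $\lambda_{ij}\in k$ and using the direct sum decomposition \eqref{eq: vector spaces}, the vanishing forces each $s^j$-component $\sum_i \lambda_{ij}D_i$ to vanish on all of $\Lg_k$ separately; the linear independence of the $D_i$ over $k$ then gives $\lambda_{ij}=0$, hence $\lambda_i = 0$ for all $i$.

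The crux is surjectivity, which is where I expect essentially all the work. Given $D \in \Der(\Lg_K)$, I would evaluate it on a $k$-basis $e_1,\ldots,e_r$ of $\Lg_k$, which is simultaneously a $K$-basis of $\Lg_K$, and decompose $D(e_m)=\sum_{j=0}^{n-1} s^j D^{(j)}(e_m)$ according to \eqref{eq: vector spaces} with $D^{(j)}(e_m)\in\Lg_k$; this defines $k$-linear maps $D^{(j)}:\Lg_k \to \Lg_k$. The key step is to show each $D^{(j)}$ is a derivation of $\Lg_k$. Since the structure constants $c_{mm'}^p$ lie in $k$, the element $[e_m,e_{m'}]$ lies in $\Lg_k$, and applying $D$ together with the $K$-bilinearity of the bracket shows that both sides of the Leibniz identity $D([e_m,e_{m'}]) = [D e_m,e_{m'}]+[e_m,D e_{m'}]$ decompose along $\bigoplus_j s^j \Lg_k$ with $\Lg_k$-valued $s^j$-components. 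Equating these components yields $D^{(j)}([e_m,e_{m'}]) = [D^{(j)}e_m,e_{m'}]+[e_m,D^{(j)}e_{m'}]$ on basis vectors, so $D^{(j)}\in\Der(\Lg_k)$. Finally $D$ and $\sum_j s^j (D^{(j)})_K$ are $K$-linear maps agreeing on the $K$-basis $\{e_m\}$, hence they coincide, giving $D = \Phi\big(\sum_j s^j \otimes D^{(j)}\big)$.

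The essential point throughout, and the main obstacle, is the interaction between the $k$-linear decomposition \eqref{eq: vector spaces} and the $K$-linearity of $D$: it is exactly because the bracket is $K$-bilinear and the structure constants are defined over $k$ that a $K$-linear derivation splits into $k$-linear derivation components indexed by the powers of $s$. I would remark that the whole statement can equivalently be read as the fact that the solution space of the homogeneous linear system (with coefficients $c_{mm'}^p\in k$) defining derivations commutes with the finite extension $k\subseteq K$; the argument above simply makes this change-of-base-field isomorphism explicit.
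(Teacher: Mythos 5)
Your proof is correct and is essentially the paper's argument: given $D \in \Der(\Lg_K)$, restrict to $\Lg_k$, decompose $D_{\mid \Lg_k} = D_1 + sD_2 + \cdots + s^{n-1}D_n$ along $\Lg_K = \Lg_k \oplus s\Lg_k \oplus \cdots \oplus s^{n-1}\Lg_k$, observe that each component is a derivation of $\Lg_k$, and conclude because two $K$-linear derivations agreeing on $\Lg_k$ (equivalently, on a $K$-basis contained in $\Lg_k$) coincide. The only difference is bookkeeping: you make the identification explicit as a map $\Phi$ and verify its injectivity and the Leibniz rule for the components $D^{(j)}$, steps which the paper asserts or leaves implicit.
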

\begin{proof}
We already mentioned that any derivation $D \in \Der(\Lg_k)$ can be viewed as a derivation $D_K \in \Der(\Lg_K)$. From this, 
the conclusion $K \otimes_k \Der(\Lg_k) \subseteq \Der(\Lg_K)$ is clear. We now show the other inclusion. 
Let $D \in \Der(\Lg_K)$. We can write
	\begin{equation*}
		D_{\mid \Lg_k} = D_1 + s D_2 + \ldots + s^{n - 1} D_n,
	\end{equation*} where $D_i: \Lg_k \to \Lg_k$ is a derivation for all $1 \leq i \leq n$. Take 
	\begin{equation*}
		D' = 1_K \otimes_k D_1 + s \otimes_k D_2 + \ldots + s^{n - 1} \otimes_k D_n,
	\end{equation*}
	then we find that $D' \in K \otimes_k \Der(\Lg_k)$. Since also $D_{\mid \Lg_k} = {D'}_{\mid \Lg_k}$ holds, this 
implies that $D = D' \in K \otimes_k \Der(\Lg_k)$. 
\end{proof} 
Hence, there is a nice correspondence between the derivations of $\Lg_k$ and $\Lg_K$. 
\begin{lem}
Let $D \in \Der(\Lg_k)$. If $D_K \in \AID(\Lg_K)$, then also $D \in \AID(\Lg_k)$.
\end{lem}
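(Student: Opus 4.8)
The plan is to verify the defining condition of $\AID(\Lg_k)$ pointwise, using the direct sum decomposition of $k$-vector spaces \eqref{eq: vector spaces}, namely $\Lg_K = \Lg_k \oplus s\Lg_k \oplus \cdots \oplus s^{n-1}\Lg_k$, to project the ambient $K$-bracket back onto $\Lg_k$. Fix an arbitrary $x \in \Lg_k \subseteq \Lg_K$. Since $D_K$ restricts to $D$ on $\Lg_k$, we have $D(x) = D_K(x)$, and the hypothesis $D_K \in \AID(\Lg_K)$ supplies some $Y \in \Lg_K$ with $D(x) = D_K(x) = [Y, x]$. The whole task is then to replace $Y$ by an element lying in the smaller space $\Lg_k$.

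To achieve this I would expand $Y = y_0 + s y_1 + \cdots + s^{n-1} y_{n-1}$ with each $y_i \in \Lg_k$, according to \eqref{eq: vector spaces}. The crucial point is that the bracket on $\Lg_K = K \otimes_k \Lg_k$ is $K$-bilinear and satisfies $[s^i \otimes y_i,\, 1 \otimes x] = s^i \otimes [y_i, x]$; since $x = 1\otimes x$ lies in $\Lg_k$, bracketing leaves the scalar $s^i$ untouched and keeps the Lie part $[y_i,x]$ inside $\Lg_k$. This yields
\[
[Y, x] = \sum_{i=0}^{n-1} s^i\,[y_i, x], \qquad [y_i, x] \in \Lg_k \text{ for all } i,
\]
which is precisely the expansion of $[Y,x] = D(x)$ along \eqref{eq: vector spaces}.

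Now I would invoke uniqueness of this expansion. Because $D(x)$ already lies in $\Lg_k$, it sits entirely in the $s^0$-summand, so comparing components forces $D(x) = [y_0, x]$ together with $[y_i, x] = 0$ for all $i \geq 1$. In particular $D(x) = [y_0, x] \in [\Lg_k, x]$ with $y_0 \in \Lg_k$. Since $x$ was arbitrary, this shows $D(x) \in [\Lg_k,x]$ for every $x\in\Lg_k$, i.e. $D \in \AID(\Lg_k)$.

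The argument is short, and the only step requiring genuine care — what one might call the main (if minor) obstacle — is the compatibility of the $K$-bracket with the $k$-grading \eqref{eq: vector spaces}: one must confirm that bracketing $s^i \otimes y_i$ against $x = 1 \otimes x$ preserves the scalar factor $s^i$ and lands the bracket $[y_i,x]$ back in $\Lg_k$, so that reading off the graded components of the vector $D(x)\in\Lg_k$ is legitimate. Once this compatibility is secured, the conclusion follows purely from the uniqueness of the decomposition, with no further computation needed.
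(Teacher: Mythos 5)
Your proof is correct and takes essentially the same route as the paper's: the paper writes the witnessing map as $\varphi = \varphi_1 + s\varphi_2 + \cdots + s^{n-1}\varphi_n$ with $\varphi_i:\Lg_K\to\Lg_k$ and compares components of $D(x)\in\Lg_k$ along $\Lg_K=\Lg_k\oplus s\Lg_k\oplus\cdots\oplus s^{n-1}\Lg_k$, which is exactly your pointwise decomposition $Y=y_0+sy_1+\cdots+s^{n-1}y_{n-1}$ of the witness for each fixed $x$. The only cosmetic difference is the sign convention ($[Y,x]$ versus $[x,\varphi(x)]$), which is immaterial since $[\Lg_k,x]$ is a subspace.
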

\begin{proof}
Let $\CB = \{1, s,\ldots, s^{n - 1}\}$ be a basis of $K$ over $k$ and $D: \Lg_k \to \Lg_k$ be a derivation. Assume that 
$D_K \in \AID(\Lg_K)$, then there exists a map $\varphi: \Lg_K \to \Lg_K$ such that $D_K(x) = [x,\varphi(x)]$ holds 
for all $x \in \Lg_K$. We can write
\begin{equation*}
\varphi(x) := \varphi_1(x) + s \varphi_2(x) + \ldots + s^{n - 1}\varphi_n(x),
\end{equation*} where $\varphi_i: \Lg_K \to \Lg_k$ for all $1 \leq i \leq n$. 
Now take an arbitrary $x \in \Lg_k$. Then we obtain
\begin{equation*}
D(x) = [x,\varphi_1(x)] + s [x,\varphi_2(x)] + \ldots + s^{n - 1} [x,\varphi_n(x)].
\end{equation*} 
Since $D(x) \in \Lg_k$, it follows from equation (\ref{eq: vector spaces}) that for all $x \in \Lg_k$,  
\begin{equation*}
D(x) = [x,\varphi_1(x)] \in \Lg_k
\end{equation*} and $[x,\varphi_i(x)] = 0$ for all $2 \leq i \leq n$. Hence, this means that $D \in \AID(\Lg_k)$. 
\end{proof} 
Note that the converse of this result does not hold in general since there exist examples for which $D \in \AID(\Lg_k)$, but 
$D_K \notin \AID(\Lg_K)$, see Example \ref{ex: explanation of the concepts}.

\begin{prop}
If $\AID(\Lg_K) \neq \Inn(\Lg_K)$, then also $\AID(\Lg_k) \neq \Inn(\Lg_k)$.
\end{prop}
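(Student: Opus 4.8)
The plan is to argue directly: from a derivation of $\Lg_K$ that is almost inner but not inner I will manufacture an almost inner derivation of $\Lg_k$ that is not inner. So let $D \in \AID(\Lg_K) \setminus \Inn(\Lg_K)$. Using the identification $\Der(\Lg_K) = K \otimes_k \Der(\Lg_k)$ and the $k$-basis $\CB = \{1,s,\ldots,s^{n-1}\}$ of $K$, I write $D = \sum_{i=1}^n s^{i-1} \otimes D_i$ with each $D_i \in \Der(\Lg_k)$; on $\Lg_k$ this means $D(x) = \sum_{i=1}^n s^{i-1} D_i(x)$ with all $D_i(x) \in \Lg_k$. The goal is to show that at least one $D_i$ lies in $\AID(\Lg_k) \setminus \Inn(\Lg_k)$.

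First I would record two structural identities. Since $\ad$ is $K$-linear and every $z \in \Lg_K$ decomposes uniquely as $z = \sum_j s^j z_j$ with $z_j \in \Lg_k$, we have $\ad(z) = \sum_j s^j \otimes \ad(z_j)$, whence $\Inn(\Lg_K) = K \otimes_k \Inn(\Lg_k)$. Secondly, for a fixed $x \in \Lg_k$ the bracket is $K$-bilinear, so $[s^j z, x] = s^j[z,x]$, and combined with the decomposition (\ref{eq: vector spaces}) this gives $[\Lg_K, x] = \bigoplus_{j=0}^{n-1} s^j [\Lg_k, x]$ as $k$-subspaces of $\Lg_K$.

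The key step is to show every $D_i$ is almost inner over $k$. Fix $x \in \Lg_k$. As $D \in \AID(\Lg_K)$ we have $D(x) \in [\Lg_K, x]$, so $\sum_{i=1}^n s^{i-1} D_i(x) \in \bigoplus_{j=0}^{n-1} s^j [\Lg_k, x]$. Comparing components in the direct sum $\Lg_K = \bigoplus_j s^j \Lg_k$ forces $D_i(x) \in [\Lg_k, x]$ for every $i$, and since $x \in \Lg_k$ was arbitrary, each $D_i \in \AID(\Lg_k)$. This is exactly the componentwise extraction used in the preceding lemma, now applied to the components of the derivation $D$ itself rather than to a single $D_K = 1_K \otimes D$.

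To conclude, I would invoke $D \notin \Inn(\Lg_K) = K \otimes_k \Inn(\Lg_k)$: were every $D_i$ inner over $k$, the sum $\sum_i s^{i-1} \otimes D_i = D$ would lie in $K \otimes_k \Inn(\Lg_k) = \Inn(\Lg_K)$, contradicting the choice of $D$. Hence some $D_{i_0}$ is not inner, and by the key step it is almost inner, so $D_{i_0} \in \AID(\Lg_k) \setminus \Inn(\Lg_k)$ and therefore $\AID(\Lg_k) \neq \Inn(\Lg_k)$. I do not anticipate a genuine obstacle: the whole argument rests on the graded decomposition of $[\Lg_K,x]$ and the two tensor identities, each of which is immediate from $K$-(bi)linearity together with (\ref{eq: vector spaces}); the only mild care needed is to keep the bookkeeping between the index $i$ of the derivation components and the grading index $j = i-1$ consistent.
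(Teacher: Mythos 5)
Your proof is correct, and while it follows the same overall strategy as the paper (decompose over the $k$-basis $\{1,s,\ldots,s^{n-1}\}$, extract $n$ component maps $D_i$ on $\Lg_k$, show each is an almost inner derivation, and show not all can be inner), the technical route is genuinely different in what gets decomposed and which property comes for free. The paper decomposes the \emph{witness map}: it writes $\varphi = \varphi_1 + s\varphi_2 + \cdots + s^{n-1}\varphi_n$ with $\varphi_i\colon \Lg_K \to \Lg_k$ and sets $D_i(x) = [x,\varphi_i(x)]$, so almost-innerness of each $D_i$ is automatic by construction, and the labour goes into verifying the Leibniz rule for $D_i$ by expanding $D([x,y])$ and $[D(x),y]+[x,D(y)]$ in powers of $s$. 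You instead decompose the \emph{derivation} $D$ itself as $\sum_i s^{i-1}\otimes D_i$ via the lemma $\Der(\Lg_K)=K\otimes_k\Der(\Lg_k)$, so the derivation property of each $D_i$ is free, and the (light) labour goes into almost-innerness, which you obtain from the grading $[\Lg_K,x]=\bigoplus_{j} s^{j}[\Lg_k,x]$ for $x\in\Lg_k$ --- a clean identity the paper never isolates, and which incidentally also makes the paper's preceding lemma (that $D_K\in\AID(\Lg_K)$ forces $D\in\AID(\Lg_k)$) immediate. The concluding steps are the same argument in different packaging: your identity $\Inn(\Lg_K)=K\otimes_k\Inn(\Lg_k)$ encapsulates the paper's observation that a derivation of $\Lg_K$ agreeing with $-\ad(\alpha)$ on $\Lg_k$ must equal $-\ad(\alpha)$. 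Note also that your $D_i$ coincide with the paper's on $\Lg_k$ (both are the components of $D_{\mid\Lg_k}$ in the decomposition $\Lg_K=\bigoplus_j s^j\Lg_k$), so the two proofs are parallel at the level of objects. What your version buys is economy and reusability: it recycles the already-established lemma on $\Der$ instead of redoing a Leibniz computation. What the paper's version buys is self-containedness: it manipulates only the $\AID$ witness $\varphi$ and never needs the two structural tensor identities you record.
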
 

\begin{proof}
Denote as before $\CB = \{1, s,\ldots, s^{n - 1}\}$ for a basis of $K$ over $k$. Let $D \in \AID(\Lg_K)$, 
$D \notin \Inn(\Lg_K)$, 
then there exists a map $\varphi: \Lg_K \to \Lg_K$ such that $D(x) = [x,\varphi(x)]$ for all $x \in \Lg_K$. Furthermore, there 
are maps $\varphi_i: \Lg_K \to \Lg_k$ (for all $1 \leq i \leq n$) such that 
\begin{equation*}
\varphi = \varphi_1 + s \varphi_2 + \ldots + s^{n - 1} \varphi_n.
\end{equation*}
Now define for each $1 \leq i \leq n$ the map
\begin{equation*}
D_i: \Lg_k \to \Lg_k: x \mapsto [x,\varphi_i(x)].
\end{equation*}
We claim that each $D_i$ is a derivation (and thus an almost inner derivation).\\
Let $x,y \in \Lg_k$, then
\begin{align*}
D([x,y]) & = \big[[x,y],\varphi_1([x,y]) + s \varphi_2([x,y]) + \ldots + s^{n - 1} \varphi_n([x,y]) \big] \\
& = D_1([x,y]) + s D_2([x,y]) + \ldots + s^{n - 1} D_n([x,y]).
\end{align*} On the other hand, we have 
\begin{align*}
& [D(x),y] + [x,D(y)]\\
& \quad = \big[ [x, \varphi_1(x) + s \varphi_2(x) + \ldots + s^{n - 1} \varphi_n(x)],y\big] + \big[ x, [y,\varphi_1(y) + 
s \varphi_2(y) + \ldots + s^{n - 1} \varphi_n(y) ] \big] \\
& \quad = [D_1(x) + s D_2(x) + \ldots + s^{n - 1} D_n(x),y] + [x,D_1(y) + s D_2(y) + \ldots + s^{n - 1} D_n(y)] \\
& \quad = [D_1(x),y] + [x,D_1(y)] + s \big( [D_2(x),y] + [x,D_2(y)]\big) + \ldots + s^{n - 1} \big([D_n(x),y] + [x,D_n(y)]\big).
\end{align*} 
Since $D$ is a derivation the above equations imply that $D_i([x,y])=[D_i(x),y] + [x,D_i(y)]$, hence $D_i \in \Der(\Lg_k)$ 
for all $1 \leq i \leq n$. \\
Moreover, we claim that there exists at least one $1 \leq i \leq n$ for which $D_i \notin \Inn(\Lg_k)$. Suppose on the 
contrary that $D_i \in \Inn(\Lg_k)$ for all $1 \leq i \leq n$. Then there exist $\alpha_i \in \Lg_k$ such that 
$D_i(x) = [x,\alpha_i]$ for all $x \in \Lg_k$. Denote $\alpha := \alpha_1 + s \alpha_2 + \ldots + s^{n - 1} \alpha_n \in K$. 
This means that
\begin{align*}
D(x) & = [x,\alpha_1] + s [x, \alpha_2] + \ldots + s^{n - 1} [x,\alpha_n] \\
& = [x,\alpha_1 + s \alpha_2 + \ldots + s^{n - 1} \alpha_n]\\
& = [x,\alpha]
\end{align*} for all $x \in \Lg_k$. Now consider $-\ad(\alpha) \in \Der(\Lg_K)$, then $D_{\mid \Lg_k} = -\ad(\alpha)_{\mid \Lg_k}$. 
Since two derivations of $\Lg_K$ are equal when they agree on $\Lg_k$, this implies that $D = - \ad(\alpha)$ is inner. 
This contradiction shows that for at least one $1 \leq i \leq n$, we have $D_i \in \AID(\Lg_k) \setminus \Inn(\Lg_k)$.
\end{proof}

This proposition means that if the Lie algebra over the bigger field $\Lg_K$ admits a non-trivial almost inner derivation, 
then also the Lie algebra over the smaller field $\Lg_k$. The converse does not hold in general, see again
Example \ref{ex: explanation of the concepts}.

\section{Constructing new almost inner derivations}

We keep using the same notations as in the previous section.
In this section, we will show how to find new almost inner derivations of the Lie algebra $\Lg_k'$, determined by 
$\AID(\Lg_K)$. Remember that $\Lg_k' = \Lg_K$ as a set, but now viewed as a Lie algebra over $k$. \\
Define the set 
\begin{equation*}
\CC(\Lg_K) := \{D \in \AID(\Lg_K) \mid D(\Lg_K) \text{ is one-dimensional and } D(\Lg_K) \subseteq Z(\Lg_K) \}.
\end{equation*}
We will show how to construct, starting from a fixed element $D \in\CC(\Lg_K)$ a collection of almost inner derivations 
of $\Lg_k'$ which are not inner, even when $D$ itself is an inner derivation of $\Lg_K$. \\
So fix some $D \in \CC(\Lg_K)$. Since $D(\Lg_K)$ is one-dimensional,  $\ker(D)$ is of codimension $1$. Hence, 
$\Lg_K = \langle y \rangle + \ker(D)$ for some $y \in \Lg_K \setminus \ker(D)$.  We also fix a choice of $y$ 
and let  $0 \neq z = D(y)$. Any element of $\Lg_K$ can be written as $ay + c$, where 
$a \in K$ and $c \in \ker(D)$. Denote again $\CB = \{1, s,\ldots, s^{n - 1}\}$ for a basis of $K$ over $k$, then any 
element $a \in K$ can be uniquely written as $a = a_1 + a_2 s + \ldots a_n s^{n - 1}$ with $a_i \in k$ for all 
$1 \leq i \leq n$. 
We use the notation $c_i(a) := a_i$ to denote the $i$-th coordinate of $a$ with respect to the basis $\CB$. 
We now have that $D: \Lg_K \to \Lg_K: ay + c \mapsto az$. Since $D \in \AID(\Lg_K)$, there exists a map 
$\varphi_D: \Lg_K \to \Lg_K$ such that 
\begin{equation*}
D(ay + c) = [ay + c, \varphi_D(ay + c)].
\end{equation*} 
Associated to $D$ we introduce $n$ new $k$-linear maps of $\Lg_k'$, namely
%KAREL: I changed the definition of D_i so that D= D_1+D_2+...+D_n
\begin{equation*}
D_i: \Lg_k' \to \Lg_k': ay + c \mapsto c_i(a) s^{i - 1} z,
\end{equation*} where $1 \leq i \leq n$. Note that $D=D_1+D_2+\cdots+ D_n$.
We remark here that the maps $D_i$ do depend on the choice of $y$, so in what follows we always assume that for a given 
$D$, a fixed $y$ outside of $\ker(D)$ has been chosen. We can now multiply the above maps 
with powers of $s$ to get a total of $n^2$ $k$-linear maps $s^{j-1} D_i : \Lg_k' \to \Lg_k'$, with $1 \leq i,j \leq n$.

\begin{lem} \label{lem: s^j D_i is AID}
For any $D \in \CC(\Lg_K)$ we have $s^{j - 1} D_i \in \AID(\Lg_k')$ for all $1 \leq i,j \leq n$. 
\end{lem}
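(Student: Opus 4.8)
The plan is to verify the two defining properties of an almost inner derivation separately: first that each $s^{j-1}D_i$ is a derivation of $\Lg_k'$, and then that it satisfies the pointwise condition $(s^{j-1}D_i)(v)\in[v,\Lg_k']$ for every $v\in\Lg_k'$. The structural fact that makes everything go through is that the bracket of $\Lg_k'$ is, by construction, the same $K$-bilinear bracket as that of $\Lg_K$; so although we only regard $\Lg_k'$ as a $k$-algebra, the bracket remains $K$-bilinear. In particular $Z(\Lg_K)=Z(\Lg_k')$ as sets, $\ad(v)$ is $K$-linear, and $z=D(y)$ lies in the one-dimensional central space $D(\Lg_K)\subseteq Z(\Lg_K)$, so every $K$-multiple of $z$ is again central.

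For the derivation property, I would first note that $D(\Lg_K)\subseteq Z(\Lg_K)$ forces $D$ to vanish on $[\Lg_K,\Lg_K]$: for $w,w'\in\Lg_K$ we have $D[w,w']=[Dw,w']+[w,Dw']=0$ since $Dw,Dw'$ are central. Writing each element uniquely as $ay+c$ with $a\in K$ and $c\in\ker D$, and using $D(ay+c)=az$ with $z\neq 0$, this says that every bracket $[w,w']$ has vanishing $y$-component. Hence $D_i[w,w']=0$, and a fortiori $(s^{j-1}D_i)[w,w']=0$. On the other side, $[(s^{j-1}D_i)w,w']+[w,(s^{j-1}D_i)w']=0$ as well, because the image of $s^{j-1}D_i$ lies in $Kz\subseteq Z(\Lg_K)$. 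Thus both sides of the Leibniz identity vanish and $s^{j-1}D_i\in\Der(\Lg_k')$. (Equivalently, one checks $D_i\in\Der(\Lg_k')$ and then observes that multiplying a $k$-derivation by the scalar $s\in K$ again yields a $k$-derivation, precisely because the bracket stays $K$-bilinear.)

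For the almost inner property, fix $v=ay+c$. Since $D\in\AID(\Lg_K)$, there is $\varphi_D(v)$ with $D(v)=az=[v,\varphi_D(v)]$, so $az\in[v,\Lg_K]$. As $\ad(v)$ is $K$-linear, its image $[v,\Lg_K]=[v,\Lg_k']$ is a $K$-subspace of $\Lg_K$ (and it agrees with $[\Lg_k',v]$ as a set, so this is exactly the condition in the definition of $\AID$). If $a\neq 0$ I may divide by the unit $a\in K^\times$ to get $z=a^{-1}(az)\in[v,\Lg_k']$, whence the $K$-multiple $(s^{j-1}D_i)(v)=c_i(a)\,s^{i+j-2}z$ lies in $[v,\Lg_k']$ as well. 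If $a=0$, then $c_i(a)=0$, so $(s^{j-1}D_i)(v)=0\in[v,\Lg_k']$ trivially. In either case the pointwise condition holds, and together with the first step this shows $s^{j-1}D_i\in\AID(\Lg_k')$.

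The step I expect to be the only delicate point is bookkeeping rather than depth. One must keep clearly separated the two roles played by $s$: as a scalar one multiplies by, and as the element whose powers define the coordinate extraction $c_i$. Moreover the crucial passage from $az\in[v,\Lg_K]$ to $z\in[v,\Lg_K]$ genuinely relies on the image of $\ad(v)$ being a $K$-subspace—it would not follow from mere $k$-linearity—so it is essential to invoke the $K$-bilinearity of the bracket on $\Lg_k'$ at exactly this point, and to check that the witnessing element, though built with $K$-scalars, indeed lives in $\Lg_K=\Lg_k'$.
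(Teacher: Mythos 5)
Your proof is correct and follows essentially the same route as the paper: both arguments rest on the observation that $D$ kills $[\Lg_K,\Lg_K]$ (so each $D_i$ is a derivation with central image) and on rescaling the relation $D(v)=az\in[v,\Lg_K]$ by a $K$-scalar, which is legitimate precisely because the bracket of $\Lg_k'$ remains $K$-bilinear. The only cosmetic difference is that the paper packages this rescaling into an explicit witness map $\varphi_{D_i}(ay+c)=\tfrac{c_i(a)s^{i-1}}{a}\,\varphi_D(ay+c)$, whereas you phrase it as membership in the $K$-subspace $[v,\Lg_K]$.
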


\begin{proof}
First note that $[\Lg_K, \Lg_K] \subseteq \ker(D)$. Indeed, let $x,y \in \Lg_K$. Because $D(\Lg_K) \subseteq Z(\Lg_K)$ 
by definition, we have that $[D(x),y] + [x,D(y)] = 0$ and 
\begin{equation*}
D([x,y]) = [D(x),y] + [x,D(y)] = 0.
\end{equation*} This last equation implies that $D_i([x,y]) = 0$ and hence, $D_i$ is a derivation. Since $D$ is almost inner, 
it is determined by a map $\varphi_D: \Lg_K \to \Lg_K$. Define
\begin{equation*}
\varphi_{D_i}: \Lg_k' \to \Lg_k': ay + c \mapsto 
\begin{cases}
\frac{c_i(a)s^{i-1}}{a} \varphi_D(ay + c) & \text{ if } a \neq 0;\\
0 & \text{ if } a = 0.
\end{cases}
\end{equation*} 
For $a = 0$, we have that 
\begin{equation*}
D_i(ay + c) = 0 = [ay + c, \varphi_{D_i}(ay + c)].
\end{equation*} When $a \neq 0$, it follows that 
\begin{align*}
[ay + c, \varphi_{D_i}(ay + c)] & = \left[ay + c, \frac{c_i(a)s^{i-1}}{a} \varphi_D(ay + c)\right] \\[0.1cm]
   & = \frac{c_i(a)s^{i-1}}{a} D(ay + c) \\[0.1cm]
   & = c_i(a) s^{i-1} z \\
   & = D_i(ay + c).
\end{align*}
This shows that $D_i \in \AID(\Lg_k')$ for all $1 \leq i \leq n$. Moreover, for each $j \in \{1,2,\ldots,n\}$, the map  
$s^{j - 1} D_i: \Lg_k' \to \Lg_k'$  is an almost inner derivation, determined by the map 
$\varphi_{s^{j - 1} D_i} := s^{j - 1} \varphi_{D_i}$.  
\end{proof}

In this way, each $D \in \CC(\Lg_K)$ gives rise to $n^2$ almost inner derivations $s^{j - 1} D_i$ of $\Lg_k'$, 
where $1 \leq i,j \leq n$.

\begin{prop}\label{lem: s^j D_i is not inner}
Suppose that $D \in \CC(\Lg_K)$ and define $A := \langle s^{j - 1} D_i: \Lg_k' \to \Lg_k' \mid 1 \leq i, j \leq n \rangle$,  
the $k$--vector space spanned by the maps $s^{j - 1} D_i$. Then $\dim A= n^2$ and 

\begin{equation*}
A \cap \Inn(\Lg_k') =
\begin{cases}
\langle s^{j - 1} D: \Lg_k' \to \Lg_k' \mid 1 \leq j \leq n\rangle & \text{ if } D \in \Inn(\Lg_K),\\
\{0\} & \text{ if }  D \notin \Inn(\Lg_K).
\end{cases}
\end{equation*}
Hence we have $\dim (A \cap \Inn(\Lg_k'))=n$ or $\dim (A \cap \Inn(\Lg_k'))=0$.
\end{prop}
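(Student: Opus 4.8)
The plan is to reduce all three assertions---that $\dim A = n^2$, the description of $A \cap \Inn(\Lg_k')$, and the resulting dimension count---to a single computational device: evaluating an arbitrary element of $A$ on the vectors $s^{m-1}y \in \Lg_k'$ for $m = 1, \ldots, n$. The one preliminary fact I would record is that $z, sz, \ldots, s^{n-1}z$ are linearly independent over $k$. This is immediate: since $z \neq 0$ and $K$ is a field, any relation $\beta z = 0$ with $\beta = \sum_p \beta_p s^{p-1}$, $\beta_p \in k$, forces $\beta = 0$, hence all $\beta_p = 0$ because $\CB$ is a $k$-basis of $K$. By the same reasoning, multiplication by $s^{m-1}$ is an invertible $k$-linear endomorphism of $\Lg_k'$, so it carries linearly independent families to linearly independent families.

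For the dimension claim I would take a relation $\sum_{i,j}\lambda_{ij}\, s^{j-1}D_i = 0$ with $\lambda_{ij}\in k$ and evaluate at $s^{m-1}y$. Since $c_i(s^{m-1}) = \delta_{im}$, only the terms with $i = m$ survive, yielding $s^{m-1}\big(\sum_j \lambda_{mj}\, s^{j-1}z\big) = 0$. Cancelling the invertible scalar $s^{m-1}$ and invoking the independence of the $s^{j-1}z$ gives $\lambda_{mj} = 0$ for all $j$; letting $m$ range over $1, \ldots, n$ shows $\lambda_{ij} = 0$ throughout, so $\dim A = n^2$.

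The heart of the matter is the description of $A \cap \Inn(\Lg_k')$, and here I would prove one general implication covering both cases at once. Suppose $E = \sum_{i,j}\lambda_{ij}\, s^{j-1}D_i$ is inner in $\Lg_k'$, say $E = \ad(w')$ with $w' \in \Lg_k' = \Lg_K$. Evaluating at $s^{m-1}y$ gives, using $K$-bilinearity of the bracket of $\Lg_K$, both $E(s^{m-1}y) = [w', s^{m-1}y] = s^{m-1}[w',y]$ and the expression $s^{m-1}\sum_j \lambda_{mj}s^{j-1}z$ computed as before. Cancelling $s^{m-1}$ shows $\sum_j \lambda_{mj}\, s^{j-1}z = [w',y]$ is independent of $m$, so by independence of the $s^{j-1}z$ the coefficients satisfy $\lambda_{ij} = \mu_j$ for some $\mu_j \in k$. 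Hence $E = \sum_j \mu_j s^{j-1}\big(\sum_i D_i\big) = \sum_j \mu_j s^{j-1}D = \nu D$, where $\nu = \sum_j \mu_j s^{j-1}\in K$ and $\nu D$ denotes the map $x \mapsto \nu\,D(x)$. Thus every inner element of $A$ is a $K$-scalar multiple of $D$.

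It then remains to decide, for $\nu \in K$, whether $\nu D$ is inner in $\Lg_k'$, and this is the step I expect to be the main (if mild) obstacle, since it is precisely where inner-ness over the two fields must be linked and where the dichotomy on $D$ enters. If $D \in \Inn(\Lg_K)$, write $D = \ad(w)$; then $\nu D(x) = \nu[w,x] = [\nu w, x]$, so $\nu D = \ad(\nu w)$ is inner in $\Lg_k'$ for every $\nu$, whence $A \cap \Inn(\Lg_k') = \{\nu D : \nu \in K\} = \langle s^{j-1}D : 1 \le j \le n\rangle$, which is $n$-dimensional by the independence already established. If instead $D \notin \Inn(\Lg_K)$ and $\nu D$ is inner with $\nu \neq 0$, then $\nu$ is invertible and $D = \ad(\nu^{-1}w')$ would be inner in $\Lg_K$, a contradiction; hence $\nu = 0$ and $E = 0$, giving $A \cap \Inn(\Lg_k') = \{0\}$. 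The final dimension statement is then read off directly.
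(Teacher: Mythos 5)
Your proof is correct and takes essentially the same route as the paper's: evaluate on the elements $s^{m-1}y$ to get the $k$-linear independence of the $n^2$ maps, then use the $K$-linearity of an inner derivation of $\Lg_k'$ (it is $\ad(w')$ on $\Lg_K$) to force the coefficients to be independent of $i$, so every inner element of $A$ has the form $\nu D$ with $\nu\in K$, and finish with the dichotomy on whether $D\in\Inn(\Lg_K)$. The only difference is cosmetic: you spell out the endgame a bit more carefully (treating $\nu=0$ explicitly and verifying the reverse inclusion $\nu D=\ad(\nu w)\in\Inn(\Lg_k')$ when $D$ is inner), where the paper compresses these points into the remark that $\Inn(\Lg_K)=\Inn(\Lg_k')$ as sets.
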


\begin{proof}
We will first show that the maps $s^{j-i} D_i$ are $k$--linearly independent. So assume that $\alpha_{i,j}\in k$ and that 
\[ \sum_{i=1}^n \sum_{j=1}^n \alpha_{i,j} s^{j-1} D_i =0 .\]
Applying the above to $s^{k-1} y$ for some $k\in \{1,\ldots ,n\}$ gives $\displaystyle \left( \sum_{j=1}^n \alpha_{k,j} 
s^{j-1}\right) s^{k-1} z =0$. As $s^{k-1} z \neq 0$, it follows that  $\displaystyle \sum_{j=1}^n \alpha_{k,j} s^{j-1}=0$ 
and since $\CB = \{1,s,s^2, \ldots,s^{n-1}\} $ is a basis of $K$ over $k$, it follows that all coefficients $\alpha_{k,j}=0$, 
showing that the maps $s^{j-i} D_i$ are $k$--linearly independent. \\[0.2cm]
Now assume that $E \in A \cap \Inn(\Lg_k')$. Let $E=\sum_{i=1}^n\sum_{j=1}^n \alpha_{i,j} s^{j-1} D_i$ and assume that $E=\ad (x)$ 
for some $x\in \Lg_k'$. Note that $E$ is also $K$-linear, since $E$ can also be seen as an inner derivation of $\Lg_K$. 
As above we have 
\[
E(s^{k-1} y) = \left( \sum_{j=1}^n \alpha_{k,j} s^{j-1}\right) s^{k-1} z 
\]
for every $k=1,2,\ldots , n$. On the other hand, since $E$ is also 
$K$--linear, it must hold that $E(s^{k-1} y) = s^{k-1} E(y)$ and therefore we get the equality 
\[ 
\sum_{j=1}^n \alpha_{k,j} s^{j-1}=  \sum_{j=1}^n \alpha_{1,j} s^{j-1}.
\]
It follows that for all $j$ we have that $\alpha_{1,j}=\alpha_{2,j}= \cdots = \alpha_{k,j}$ and we let $\beta_j=\alpha_{1,j}$ be 
this common value. Hence $E= \sum_{i=1}^n (\sum_{j=1}^n \beta_j s^{j-1}) D_i= 
\beta (D_1 +D_2 + \cdots + D_n)=
\beta D$, where $\beta= \sum_{j=1}^{n} \beta_j s^{j-1} \in K$. So $E= \beta D$ for some $\beta \in K$ and therefore 
$E\in \Inn(\Lg_K)$ if and only if $D \in \Inn (\Lg_K)$ and if this is the case, the above shows that 
$E \in \langle s^{j-1} D\mid 1\leq j \leq n \rangle$ which finishes the proof, since as sets $\Inn(\Lg_K)=\Inn(\Lg_k')$.
\end{proof}

In many cases, the above proposition allows us to construct Lie algebras over a non algebraically closed field $k$ with many 
almost inner derivations which are not inner. As an example of this we have the following result.

\begin{cor} Let $K$ be a field extension of a field $k$, with $[K:k]= n\geq 2$. Using the notation from above, 
assume that $\Lg_k$ is a $c$-step nilpotent Lie algebra for $c\geq2$ with $\dim( \Lg_k^c)=1$, then 
$\dim (\AID(\Lg_k')) - \dim(\Inn(\Lg_k')) \geq n^2 -n >0$.
\end{cor}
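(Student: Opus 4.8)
The plan is to produce a single, explicitly chosen element $D \in \CC(\Lg_K)$ that happens to be inner, and then simply read off the bound from Lemma \ref{lem: s^j D_i is AID} and Proposition \ref{lem: s^j D_i is not inner}. The starting observation is that the hypotheses on $\Lg_k$ transfer verbatim to $\Lg_K = K \otimes_k \Lg_k$: since extension of scalars commutes with the lower central series, $\Lg_K^i = K \otimes_k \Lg_k^i$ for all $i$, so $\Lg_K$ is again $c$-step nilpotent and $\Lg_K^c = K \otimes_k \Lg_k^c$ is one-dimensional over $K$. Write $\Lg_K^c = \langle z \rangle$. Because $\Lg_K^{c+1} = 0$ we have $[\Lg_K, \Lg_K^c] = 0$, i.e. $z \in Z(\Lg_K)$.

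First I would construct $D$. Since $\Lg_K^c = [\Lg_K, \Lg_K^{c-1}] \neq 0$ (here $c \geq 2$ guarantees $\Lg_K^{c-1}$ makes sense and $\Lg_K^c \ne 0$), I can choose $v \in \Lg_K^{c-1}$ with $\ad(v) \neq 0$ and set $D := \ad(v)$. For every $x \in \Lg_K$ one has $D(x) = [v,x] \in [\Lg_K^{c-1}, \Lg_K] = \Lg_K^c = \langle z \rangle$, so the image of $D$ is a nonzero subspace of the one-dimensional space $\langle z \rangle$, hence equals $\langle z \rangle$; moreover $\langle z \rangle \subseteq Z(\Lg_K)$. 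Thus $D$ has one-dimensional central image, so $D \in \CC(\Lg_K)$, and by its very definition $D \in \Inn(\Lg_K)$.

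The counting is then routine. By Lemma \ref{lem: s^j D_i is AID} the $n^2$ maps $s^{j-1}D_i$ all lie in $\AID(\Lg_k')$, so the space $A = \langle s^{j-1}D_i \mid 1 \le i,j \le n\rangle$ they span satisfies $A \subseteq \AID(\Lg_k')$, and by Proposition \ref{lem: s^j D_i is not inner} we have $\dim A = n^2$. Since $D \in \Inn(\Lg_K)$, that same proposition gives $\dim(A \cap \Inn(\Lg_k')) = n$. As both $A$ and $\Inn(\Lg_k')$ sit inside $\AID(\Lg_k')$, I then estimate
\begin{align*}
\dim \AID(\Lg_k') & \geq \dim\big(A + \Inn(\Lg_k')\big) \\
& = \dim A + \dim \Inn(\Lg_k') - \dim\big(A \cap \Inn(\Lg_k')\big) \\
& = n^2 + \dim \Inn(\Lg_k') - n,
\end{align*}
which rearranges to $\dim \AID(\Lg_k') - \dim \Inn(\Lg_k') \geq n^2 - n$, and $n^2 - n = n(n-1) > 0$ because $n \geq 2$.

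I do not expect a genuine obstacle here; the whole content is the construction of $D$ in the first two steps, after which the inclusion--exclusion formula does the rest. The one point deserving care is seeing that the two hypotheses are exactly what make this work: the assumption $c \geq 2$ (together with $\Lg_K^{c+1}=0$) places $\Lg_K^c$ inside the center, while $\dim \Lg_k^c = 1$ forces the image of $\ad(v)$ to be one-dimensional; both are needed to land $D$ in $\CC(\Lg_K)$. It is equally important that the $D$ we produce is \emph{inner}, since this is precisely the case in which Proposition \ref{lem: s^j D_i is not inner} returns the intersection dimension $n$ rather than $0$, and it is the gap $n^2 - n$ between these two numbers that yields the stated inequality.
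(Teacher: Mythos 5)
Your proof is correct and follows essentially the same route as the paper: both construct $D=\ad(v)$ with $v$ in the $(c-1)$-st term of the lower central series, note its image is the one-dimensional central subspace $\Lg^c$ so that $D\in\CC(\Lg_K)\cap\Inn(\Lg_K)$, and then invoke Proposition \ref{lem: s^j D_i is not inner} to get an $n^2$-dimensional subspace of $\AID(\Lg_k')$ meeting $\Inn(\Lg_k')$ in dimension $n$. The only cosmetic differences are that you pick $v$ over $K$ (via the standard fact $\Lg_K^i = K\otimes_k \Lg_k^i$) where the paper picks it in $\Lg_k^{c-1}$, and that you write out the inclusion--exclusion count the paper leaves implicit.
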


\begin{proof}
Let $v \in \Lg_k^{c-1}$ be an element with $[v,\Lg_k]\neq 0$. Such a $v$ exists, since we assume that 
$\Lg_k$ is $c$--step nilpotent. Moreover $\Lg_k^c = {\rm Im}(\ad(v)) \subseteq Z(\Lg_k)$. 
It follows that $D=\ad(v) \in \CC(\Lg_K)$. By Proposition~\ref{lem: s^j D_i is not inner} we know that 
$A=\langle s^{j-1} D_i \mid 1\leq i,j \leq n \rangle$ is an $n^2$-dimensional subspace of $\AID(\Lg_k')$ intersecting 
$\Inn(\Lg_k')$ in a $n$-dimensional space, which proves the corollary.
\end{proof}

Note that the above corollary can for example be applied when $\Lg_k$ is a filiform Lie algebra. 
%KAREL: I deleted here a larger part, since I could not repair Proposition 7.4
We finish this paper with an example where $k=\R$ and $K=\C$, i.e., with $n=2$. Then we can take $s=i$ with $i^2=-1$.
% KAREL: I deleted the notation \Ln_3(\R) since we did not use this notation before.
\begin{ex} \label{ex: explanation of the concepts}
Consider the real Heisenberg Lie algebra $\Lg_\R$, then $\Lg_\C $ is the complex Heisenberg Lie algebra and both of 
these algebras can be described via a basis $\{e_1,e_2,e_3\}$, where the non-zero Lie brackets are given by 
$[e_1,e_2] = e_3$. Then we have
\begin{align*}
\AID(\Lg_\R) & = \Inn(\Lg_\R), \\
\AID(\Lg_\C) & = \Inn(\Lg_\C).
\end{align*}
It follows from Lemma $\ref{lem: extension of dimension 2}$ that 
\begin{equation*}
\AID(\Lg_\C') \cong \AID(\Lg_\C \oplus \Lg_\C) =  \AID(\Lg_\C) \oplus \AID(\Lg_\C) = 
\Inn(\Lg_\C) \oplus\Inn (\Lg_\C) \cong \Inn(\Lg_\C').
\end{equation*}
Let $D = \ad(e_1)$ and $E = \ad(e_2)$ be inner derivations of $\Lg_\C$, then both $D,E \in \CC(\Lg_\C)$.\\
The Lie algebra $\Lg_\R'$ has a basis $\{ e_1, e_2, e_3,f_1= i e_1, f_2= i e_2, f_3= i e_3\} $ and non-zero brackets
\[ [e_1, e_2]= e_3, \; [e_1,f_2]= f_3,\; [f_1, e_2] = f_3,\; [f_1, f_2] = - e_3.\]
We can now consider the $\R$--linear maps $D_1, \; iD_1,\; E_1,\; i E_1$ as defined above and these satisfy:
\begin{align*}
 D_1(e_2) & = e_3,\; D_1(e_1)=D_1(f_1) = D_1 (f_2) = D_1(e_3) = D_1( f_3) =0,\\
i D_1(e_2) & = f_3,\; D_1(e_1)=D_1(f_1) = D_1 (f_2) = D_1(e_3) = D_1( f_3) =0, \\
E_1(e_1) & = - e_3,\; E_1(f_1)=E_1( e_2) = E_1 (f_2) = E_1(e_3) = E_1( f_3) =0, \\
i E_1(e_1) & = -f_3,\; E_1(f_1)=E_1( e_2) = E_1 (f_2) = E_1(e_3) = E_1( f_3) =0.
\end{align*}
We have $D_1, iD_1, E_1, i E_1 \in \AID(\Lg_\R')$ and it is easy to see that 
$\langle D_1, iD_1, E_1, i E_1 \rangle \cap \Inn(\Lg_\R')=0$, so that we obtain
\begin{align*}
\dim (\AID(\Lg_\R')) & \geq 4 + \dim (\Inn(\Lg_\R')) = 8, \\ 
\dim(\AID(\Lg_\C'))  & =\dim(\Inn(\Lg_\C'))= \dim(\Inn(\Lg_\C \oplus \Lg_\C))=4.
\end{align*}
The maps $D_1,i D_1, E_1, i E_1$ do, like any other derivation, extend to derivations of $\Lg_\C'$, but these will no longer 
be almost inner derivations.
\end{ex}

\section{Acknowledgements}
The first author was supported in part by the Austrian Science Fund (FWF), grant P28079 and grant I3248. 
The second and third author are supported by a long term structural funding, the Methusalem grant of the 
Flemish Government. The third author gratefully acknowledges financial support during his research stay at 
the Erwin Schr\"odinger International Institute for Mathematics and Physics in Vienna.

\end{document}